\documentclass[twoside,11pt]{amsart}

\usepackage[english]{babel}
\usepackage[utf8x]{inputenc}
\usepackage{url}
\usepackage{amsmath}
\usepackage{graphicx}
\usepackage[colorinlistoftodos]{todonotes}
\usepackage{amsmath,amscd,amsthm}
\usepackage{amstext, amsfonts, a4}
\usepackage{amssymb}
\usepackage{tikz-cd}
\usepackage{fancyhdr}
\usepackage{enumerate}
\usepackage{xcolor}
\usepackage{cleveref}

\numberwithin{equation}{section}
\newtheorem{thm}[equation]{Theorem}
\newtheorem*{thm*}{Theorem}

\newtheorem{prop}[equation]{Proposition}
\newtheorem{cor}[equation]{Corollary}

\theoremstyle{definition}

\newtheorem{ex}[equation]{Example}

\newtheorem{qu}[equation]{Question}

\renewcommand{\dim}{\operatorname{\mathsf{dim}}}

\newcommand{\car}{\mathsf{char}}

\newcommand{\vf}{\varphi}
\newcommand{\mg}[1]{{#1}^{\times}}
\newcommand{\sq}[1]{{#1}^{\times 2}}
\newcommand{\scg}[1]{\mg{#1}/\sq{#1}}

\newcommand{\nat}{\mathbb{N}}

\newcommand{\la}{\langle}
\newcommand{\ra}{\rangle}
\newcommand{\lla}{\la\!\la}
\newcommand{\rra}{\ra\!\ra}

\renewcommand{\leq}{\leqslant}
\renewcommand{\geq}{\geqslant}

\newcommand{\tors}{\mathsf{tors}}

\newcommand\rk{\operatorname{\mathsf{rk}}}

\newcommand{\W}{\mathsf{W}}
\newcommand{\I}{\mathsf{I}}
\newcommand{\rr}{\mathbb{R}}

\newcommand{\mc}{\mathcal}

\newcommand{\ovl}{\overline}
\renewcommand{\setminus}{\smallsetminus}

\renewcommand{\dim}{\mathsf{dim}}
\renewcommand{\leq}{\leqslant}
\renewcommand{\geq}{\geqslant}
\renewcommand{\sup}{\mathsf{sup}}

\newcommand{\D}{\mathsf{D}}

\newcommand{\mf}{\mathfrak}
\newcommand{\mfm}{\mf{m}}

\newcommand{\qq}{\mathbb{Q}}

\newcommand{\wt}{\widetilde}

\renewcommand{\setminus}{\smallsetminus}

\makeatletter
\newcommand{\bigperp}{%
  \mathop{\mathpalette\bigp@rp\relax}%
  \displaylimits
}

\newcommand{\bigp@rp}[2]{%
  \vcenter{
    \m@th\hbox{\scalebox{\ifx#1\displaystyle2.1\else1.5\fi}{$#1\perp$}}
  }%
}
\makeatother

\title[Fields where torsion forms are strongly balanced]{Fields where torsion forms decompose} 

\date{13 May, 2026}

\author{M.~Archita}
\author{Karim Johannes Becher}

\address{University of Antwerp, Department of Mathematics, Antwerp, Belgium.}
\email{karimjohannes.becher@uantwerpen.be}
\email{archita.mondal@uantwerpen.be}

\begin{document}

\begin{abstract}
Over a real field which is an extension of transcendence degree $1$ of a hereditarily pythagorean base field, every quadratic form which is torsion decomposes into an orthogonal sum of $2$-dimensional torsion forms. 
This is obtained from a more general study of  weakly isotropic forms over henselian valued fields and over function fields in one variable.

\medskip\noindent
{\sc Keywords:} 
Real field, pythagorean field, sums of squares, quadratic form, hyperbolic, weakly isotropic, henselian valuation, local-global principle

\medskip\noindent
{\sc Classification (MSC 2020):} 11E04, 
11E81, 
12D15, 
12J10 
\end{abstract}

\maketitle

\section{Introduction}

Let $K$ be a field of characteristic different from $2$.
By the Artin-Schreier Theorem \cite[Chap.~VIII, Theorem 1.10]{Lam05}, $K$ can be endowed with a field ordering if and only if  $-1$ is not a sum of squares in $K$. In this case we call the field $K$ \emph{real}, otherwise \emph{nonreal}.

We denote by $\W K$ the Witt ring of quadratic forms over $K$.
A regular quadratic form over $K$ which corresponds to a torsion element of $\W K$ is called a \emph{torsion form}.
If $K$ is nonreal, then  $\W K$ is a torsion group, and hence every regular quadratic form over $K$ is torsion, and by diagonalization it decomposes into a sum of $1$-dimensional torsion forms. Assume that $K$ is real.
In that case, Pfister's Local-Global Principle \cite[Chap.~VIII, Theorem~3.2]{Lam05} characterizes torsion forms as those regular quadratic forms having signature zero at every field ordering of $K$.
In particular, torsion forms are even-dimensional. 
Two-dimensional quadratic forms are called \emph{binary forms}. 
In \cite{AP77}, J.K.~Arason and A.~Pfister called a quadratic form \emph{strongly balanced} (\emph{stark ausgeglichen}) if it is isometric to a sum of binary torsion forms.
They studied the problem over which real fields all torsion forms (\emph{weakly balanced} / \emph{schwach ausgeglichen}) are in fact strongly balanced.

In particular, they show that the quadratic form $$\la 1,Y,-(3+X^2+Y),-Y(1+X^2+3Y)\ra$$ over $\qq(X,Y)$ (the rational function field in two variables $X,Y$ over $\qq$) is torsion but not strongly balanced. 
See \Cref{AP:w3} for the more general statement on rational function fields in two variables. 
No example of such a quadratic form seems to be known over a field that is not a rational function field in two variables over some real field.

In particular, one may ask over which base fields $K$ every real field extension of transcendence degree $1$ has the property that all torsion forms are strongly balanced. 
When $K$ is a real number field, \cite[Satz 5]{AP77} establishes this property for the rational function field $K(X)$, but it is still open whether the same holds for its finite extensions.

    In this article, we settle this problem for a different class of base fields, confirming \cite[Conjecture 4.4]{Bec06} by proving the following.

\begin{thm*}
    Let $K$ be a hereditarily pythagorean field. Let $F/K$ be a field extension of transcendence degree $1$ such that $F$ is real. 
    Then every torsion form over $F$ is strongly balanced.
\end{thm*}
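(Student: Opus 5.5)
We argue by induction on $\dim\vf$ for a torsion form $\vf$ over $F$. The key reduction is the following: it suffices to show that every torsion form $\vf$ of dimension $\geq 4$ over $F$ contains a binary torsion subform. If $\vf\simeq\beta\perp\vf'$ with $\beta$ binary torsion, then $\vf'$ is again torsion, since the torsion elements form an ideal in $\W F$. Moreover, every binary torsion form is $\la a,-ab\ra$ with $b$ a sum of squares, or it is hyperbolic. So a torsion form of dimension $2$ is already strongly balanced, and the induction goes through.

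To find such a binary subform, we use the reformulation that $\vf$ contains a binary torsion subform if and only if $\vf$ represents some $a$ and $ab$ with $b\in\sum F^2$ nonzero. Equivalently, the form $\vf\perp -b\vf$ is isotropic in a controlled way. We aim at the stronger statement that for $\dim\vf\geq 4$ torsion, there is $a\in\mg F$ with $\vf\simeq\la a\ra\perp\psi$ and $\psi$ weakly isotropic over a subfield of controlled type. Concretely, we prove two results.

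\textbf{(A) Henselian step.} Let $(L,v)$ be henselian with residue field $k$ and $2\in\mg{\mathcal O_v}$. Write $\vf\simeq\vf_1\perp\pi\vf_2$ with residue forms $\ovl{\vf_1},\ovl{\vf_2}$ over $k$. Then $\vf$ is torsion over $L$ if and only if both residue forms are torsion over $k$. Moreover, if every torsion form over $k$ is strongly balanced, the same holds over $L$, together with the analogous statement for weak isotropy.

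\textbf{(B) Local-global principle.} For $F$ a function field in one variable over a hereditarily pythagorean $K$, we show that a torsion form $\vf$ of dimension $\geq 4$ (or, after the reduction above, the auxiliary forms needed) is weakly isotropic over $F$ if it is weakly isotropic over all henselizations $F_v$ at valuations $v$ trivial on $K$. Here we use that hereditarily pythagorean fields have $\sum F^2=$ products of at most two squares' behaviour, namely the Pythagoras number of $F$ is $2$ (Becker), and that $\I^2 F$ is torsion-free.

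For the local part, the completions $F_v$ have residue fields that are finite extensions of $K$, hence hereditarily pythagorean or nonreal of a very restricted type (e.g.\ $u$-invariant $\leq 2$). By (A) and an induction over the rank, torsion forms over $F_v$ are strongly balanced, so local weak isotropy holds.

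The main obstacle is step (B): passing from local to global existence of a binary torsion subform. I would attempt it by using that for Pythagoras number $2$ a sum of squares $b$ is $1+c^2$ up to squares. We then need $a,c$ with $\vf$ representing $a$ and $a(1+c^2)$, i.e.\ isotropy of $\vf\perp -(1+c^2)\vf'$ for suitable choices. Torsion forms in $\I^2F$ reduce, since $\I^2F$ is torsion-free, to hyperbolicity, so the real content lies in the first layer, governed by the discriminant. I expect a Hasse principle for isotropy of forms of the shape $\la 1,-b\ra\otimes\vf$ over $F$ to be needed; this is a statement on $\I^2$ and $\I^3$ cohomology of $F$. I would derive it from the known local-global principle for $H^n$ of function fields over such $K$ (Milnor $K$-theory and the vanishing of $H^3$ in the nonreal part, cohomological dimension considerations via $F(\sqrt{-1})$ having $\mathrm{cd}\leq 2$). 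I would try this first and check carefully whether the choice of $c$ can be made globally by weak approximation of valuations.
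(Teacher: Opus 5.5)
Your opening reduction is correct: it suffices that every torsion form of dimension $\geq 4$ over $F$ contains a binary torsion subform. The rest of the plan, however, rests on statements that fail for a general hereditarily pythagorean $K$, and the decisive step (B) is never supplied.

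\emph{False ingredients.} Becker's theorem gives $p(K(X))=2$ only for the rational function field. For a function field in one variable $F/K$ one only has $p(F)\leq 5$, and examples with $p(F)=3$ exist. Indeed, if $p(F)=2$ held in general, the theorem would follow at once from $w(F)\leq p(F)$ for non-pythagorean $F$, with no local-global argument. So writing a sum of squares as $1+c^2$ up to squares is unavailable exactly where it matters.

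Likewise $\I^2F$ need not be torsion-free. For $K=\rr(\!(t)\!)$ and $F=K(X)$, the form $\lla 1+X^2,t\rra=\la 1,-(1+X^2)\ra\otimes\la 1,-t\ra$ is torsion but anisotropic. With respect to the Gauss extension of the $t$-adic valuation, its residue forms $\pm\la 1,-(1+X^2)\ra$ over $\rr(X)$ are anisotropic. Hence your claim that only the discriminant layer carries content collapses.

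The cohomological input also fails. For $K=\rr(\!(t_1)\!)(\!(t_2)\!)$ and $F=K(X)$, the field $F(\sqrt{-1})$ has $\mathrm{cd}_2=3$, and $K(\sqrt{-1})$ has $u$-invariant $4$. Finally, (A) is written only for discrete valuations, whereas the henselian valuations that occur can have arbitrary value groups; the paper uses Durfee's theorem to handle this.

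\emph{The missing step.} As stated, (B) is vacuous, since every torsion form is weakly isotropic, and the ``auxiliary forms'' are never specified. What you need is a local-global principle for the property of containing a binary torsion subform. This is not the isotropy of any fixed finite-dimensional form, because the sum of squares $b$ varies. Moreover, no local-global principle with respect to valuations trivial on $K$ is available over an arbitrary hereditarily pythagorean $K$.

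The paper closes this gap as follows.
\begin{enumerate}[$(i)$]
\item By Arason--Pfister it suffices to treat torsion $2$-fold Pfister forms $\pi$. For these, strong balancedness is equivalent to isotropy of $n\times\la 1\ra\perp\pi$ for some $n$, that is, to isotropy of the infinite-dimensional quadratic space given as the direct limit of these forms.
\item Br\"ocker's theorem provides a henselian valuation $v$ on $K$ whose residue field is hereditarily pythagorean with $|\scg{(Kv)}|\leq 4$. Over such a residue field every function field in one variable $E$ satisfies $\I^3_\tors E=0$, whence $w(E)\leq 2$.
\item For $v$ of rank $1$, Mehmeti's local-global principle, extended to infinite-dimensional spaces by compactness of $\mc{V}(F/v)$, reduces the question to the henselizations $F^\gamma$ with $\gamma\in\mc{V}(F/v)$. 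There Durfee's theorem gives $w(F^\gamma)=w(F\gamma)$, and $w(F\gamma)\leq 2$ because $F\gamma$ is algebraic or a function field in one variable over $Kv$.
\item Arbitrary rank follows by induction on the rank and a descent to subfields on which $v$ has finite rank.
\end{enumerate}
Without such a henselian valuation on $K$ and a local-global principle relative to it, your plan has no mechanism for passing from local to global.
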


Recall a field is called \emph{pythagorean} if in it every sum of two squares is equal to a square,
and \emph{hereditarily
pythagorean} if it is real and all its finite real field extensions are pythagorean.

The theorem will be established by \Cref{main}, which states a formulation that is equivalent in view of \Cref{Arason-Pfister}.
We will use Br\"ocker's valuation theoretic characterization of hereditarily pythagorean fields \cite[Prop.~3.5]{Bro76} and derive our theorem from a more general relation for function fields in one variable over a henselian valued field.
Our proof follows the path of  \cite{BDGMZ23}, using the same methods. In particular, it relies on  \cite[Theorem 4.4]{BDGMZ23}, a variant of Mehmeti's local-global principle from \cite{Meh19} for quadratic forms over a function field in one variable with a henselian rank-$1$ valued base field.
For the present purpose, however, we need an extension covering infinite-dimensional quadratic spaces, which is provided by \Cref{Mehmeti-infinite} and of independent interest.

We will use basic facts and standard terminology from quadratic form theory and valuation theory, our standard references being \cite{Lam05} and \cite{EP05}, respectively.

\section{Minimal weakly isotropic forms}

\label{S:mwif}

By a \emph{quadratic space over $K$}, we mean a pair $\vf=(V,q)$ of a $K$-vector space $V$ 
and a map $q:V\to K$ such that 
$V\times V\to K,(x,y)\mapsto q(x+y)-q(x)-q(y)$ is $K$-bilinear and  $q(\lambda x)=\lambda^2 x$ for every $\lambda\in K$ and every $x\in V$.
A \emph{quadratic form} is a finite-dimensional quadratic space, and if the corresponding $K$-bilinear form is non-degenerate, it is said to be \emph{regular}.

Let $\vf=(V,q)$ be a quadratic space over $K$.
We call $\vf$ \emph{isotropic} if $q(v)=0$ for some $v\in V\setminus\{0\}$, and \emph{anisotropic} otherwise.
    For a field extension $L/K$, $q$ extends uniquely to a map $q_L: V_L\to L$ on the $L$-vector space $V_L=V\otimes_KL$ that makes $(V_L,q_L)$ into a quadratic space over $L$, which we denote by $\vf_L$.
    
Let $\vf$ be a regular quadratic form over $K$. For $n\in\nat$, we denote by $n\times \vf$ the $n$-fold orthogonal sum $\vf\perp\ldots\perp\vf$. We call $\vf$ \emph{weakly isotropic} if $n\times \vf$ is isotropic for some $n\in\nat$, and we call $\vf$ \emph{minimal weakly isotropic} if it is weakly isotropic but contains no proper weakly isotropic subform over $K$.
We denote by $\dim(\vf)$ the dimension of $\vf$.
In \cite{Bec06}, the second author of the present article introduced the following field invariant:
$$w(K)=\sup \{\dim(\vf)\mid\vf \mbox{ minimal weakly isotropic form over }K\}\in \nat \cup \{\infty\}\,.$$
We continue the study of this invariant.

\begin{prop}\label{nr}
We have $w(K)=1$ if and only if $K$ is nonreal.
\end{prop}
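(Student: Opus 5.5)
The plan is to prove both directions directly from the definitions, using only the Artin--Schreier characterization of real fields. First I will note that $w(K)\geq 1$ always, as long as some weakly isotropic form exists. The source of such forms will be the binary form $\la 1,-1\ra$ (a hyperbolic plane), which is isotropic and hence weakly isotropic. By the definition of minimality, any weakly isotropic form contains a minimal weakly isotropic subform: pass to a weakly isotropic subform of smallest dimension. Such a subform has dimension at least $1$, since a form of dimension $0$ has no nonzero vectors and so no multiple of it is isotropic. Hence the supremum defining $w(K)$ is taken over a nonempty set of positive integers, and $w(K)\geq 1$.

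Suppose first that $K$ is nonreal. Then $-1$ is a sum of squares, say $-1=\sum_{i=1}^{m}a_i^2$, and so $\sum_{i=1}^{m}a_i^2+1^2=0$. For any $a\in\mg K$, the form $(m+1)\times\la a\ra$ is therefore isotropic, via the vector $(a_1,\dots,a_m,1)$. So every $1$-dimensional regular form is weakly isotropic. Now let $\vf$ be a minimal weakly isotropic form with $\dim(\vf)\geq 2$. Diagonalizing gives $\vf\simeq\la a_1,\dots,a_n\ra$, so $\vf$ contains the proper subform $\la a_1\ra$, which is weakly isotropic. This contradicts minimality. Hence every minimal weakly isotropic form is $1$-dimensional, and $w(K)=1$.

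Conversely, suppose $K$ is real. I will show that $\la 1,-1\ra$ is minimal weakly isotropic, which gives $w(K)\geq 2$. It is isotropic. Its proper regular subforms are $1$-dimensional and are of the form $\la c\ra$ with $c$ a value $q(v)\neq 0$; up to squares these are $\la 1\ra$ and $\la -1\ra$, and in fact any $\la c\ra$ with $c\in\mg K$ that occurs. Such a form $\la c\ra$ is weakly isotropic exactly when $c\sum x_i^2=0$ for some nontrivial $(x_i)$, that is, when $\sum x_i^2=0$ nontrivially. In that case $-1$ would be a sum of squares: divide by $x_1^2$ after reordering so that $x_1\neq0$. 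This is impossible because $K$ is real. So no $1$-dimensional form is weakly isotropic, $\la 1,-1\ra$ is minimal, and $w(K)\geq 2\neq 1$.

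The only delicate point is the bookkeeping about which subforms are counted. I will take "subform" to mean a regular subform, consistent with the definition of weakly isotropic forms as regular forms. Even with this reading, the argument above covers every $1$-dimensional subform, so the degenerate case causes no difficulty. No earlier result beyond Artin--Schreier is needed.
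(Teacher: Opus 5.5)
Your proof is correct and follows the same route as the paper, which rests on the same observation: a $1$-dimensional form is weakly isotropic exactly when $-1$ is a sum of squares, and in the nonreal case every $1$-dimensional form is weakly isotropic. You simply spell out what the paper leaves implicit, namely that $\la 1,-1\ra$ is a minimal weakly isotropic form witnessing $w(K)\geq 2$ in the real case.
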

\begin{proof}
There exists a $1$-dimensional weakly isotropic quadratic form over $K$ if and only if $-1$ is a sum of squares, that is, if and only if $K$ is nonreal, and in that case, any $1$-dimensional quadratic form over $K$ is weakly isotropic.
\end{proof}

For a quadratic form $\vf$ over $K$, we denote by $\D(\vf)$ the set of nonzero elements represented by $\vf$.
For $n\in\nat$ and $a_1,\dots,a_n\in\mg{K}$, we write $\D\la a_1,\dots,a_n\ra$ instead of $\D(\la a_1,\dots,a_n\ra)$.
We further denote by $\mg{(\mathsf{\Sigma}K^2)}$ the subgroup of $\mg{K}$ consisting of the nonzero sums of squares in $K$.
A quadratic form $\vf$ over $K$ is called \emph{totally positive} if it is regular and $\D(\vf)\subseteq \mg{(\mathsf{\Sigma}K^2)}$.

The following statement generalizes \cite[Satz~1, Folgerung]{AP77}, which corresponds to the case where $r=2$.

\begin{prop}\label{AP:B}
        Let $r\in\nat$ with $r\geq 2$. Assume that, for every $t_0,\dots,t_r\in\mg{(\mathsf{\Sigma}K^2)}$, there exists a totally positive $r$-dimensional quadratic form $\rho$ over $K$ with $t_0,\dots,t_r\in\D(\rho)$.
    Then $w(K)\leq r$.
\end{prop}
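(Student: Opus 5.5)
We may assume that $K$ is real: if $K$ is nonreal, then $w(K)=1\leq r$ by \Cref{nr}. The approach is to translate weak isotropy into a linear relation with sums-of-squares coefficients. For $\vf=\la a_1,\dots,a_n\ra$, some $m\times\vf$ is isotropic if and only if there are $s_1,\dots,s_n\in\Sigma K^2$, not all zero, with $\sum_{i=1}^n a_is_i=0$. A minimal weakly isotropic form therefore admits such a relation in which every $s_i$ is nonzero. Otherwise the subform on the support of the relation would be a proper weakly isotropic subform.

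So let $\vf=\la a_1,\dots,a_n\ra$ be weakly isotropic with $n\geq r+1$. We fix a relation $\sum_i a_is_i=0$ with all $s_i\in\mg{(\mathsf{\Sigma}K^2)}$. The goal is a new relation of the same kind whose support is a proper subset of $\{1,\dots,n\}$, which contradicts minimality. We apply the hypothesis to $t_0,\dots,t_r$, taken to be $s_1,\dots,s_{r+1}$, possibly rescaled by suitable sums of squares. This yields a totally positive $r$-dimensional form $\rho=(V,q)$ and vectors $v_1,\dots,v_{r+1}\in V$ with $q(v_i)=s_i$. Since $\dim V=r$, the $v_i$ are linearly dependent: $\sum_i\lambda_iv_i=0$ for some $\lambda\neq0$.

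Next we view the relation as an isotropic vector for $\vf\otimes\rho$: the tuple $(v_1,\dots,v_{r+1},w_{r+2},\dots,w_n)$ with $q(w_i)=s_i$. We then move along the affine family $v_i\mapsto v_i+\mu\lambda_i u$, for a suitable $u\in V$ and parameter $\mu\in K$, which is designed to keep the total value $\sum_i a_iq(\cdot)$ equal to $0$. Concretely, $\sum_i a_i q(v_i+\mu\lambda_iu)$ is a polynomial in $\mu$ of degree at most $2$ vanishing at $\mu=0$. We choose $u$ so that its linear and quadratic terms cancel, or so that the polynomial has a second root $\mu_0\in K$. At that root we want one component $v_j+\mu_0\lambda_ju$ to vanish. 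Every $q(\cdot)$ still lies in $\Sigma K^2$ because $\rho$ is totally positive. This gives a relation $\sum_i a_is_i'=0$ with all $s_i'$ sums of squares, not all zero (at least one $s_i'$ with $i>r+1$, or with $i\neq j$, stays nonzero), and with $s_j'=0$. Hence the subform $\la a_i\mid i\neq j\ra$ is weakly isotropic, so $\vf$ is not minimal.

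The main obstacle is this third step: arranging the degree-two polynomial in $\mu$ to have a rational root at which some component vanishes. If the direct choice fails, the first thing to try is to exploit the freedom in the $t_i$. For example, take $t_0=\sum_{i>r+1}a_i\dots$-free combinations such as $t_i=s_i$ for $1\leq i\leq r$ and $t_0=s_{r+1}$, and use the $r$ vectors $v_1,\dots,v_r$ together with $v_{r+1}$ to directly write $v_{r+1}=\sum_{i\leq r}\lambda_iv_i$ (the case $r=2$ as in \cite[Satz~1, Folgerung]{AP77}). Then absorb $a_{r+1}s_{r+1}$ into the other terms via the identity $q(\sum\lambda_iv_i)=\sum_{i,k}\lambda_i\lambda_k b(v_i,v_k)$, and iterate the reduction on $n$.
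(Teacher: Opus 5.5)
There is a genuine gap, and it sits in the step you flag as the main obstacle. That step cannot be completed in general, because its target is too strong.

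\textbf{Why the target fails.} You aim for a relation $\sum_i a_is_i'=0$ with sums of squares $s_i'$, not all zero, and $s_j'=0$ for some $j$. That is, you want a coordinate subform $\la a_i\mid i\neq j\ra$ of the \emph{given} diagonalization to be weakly isotropic. Non-minimality only says that \emph{some} proper subform is weakly isotropic, and that subform need not be spanned by part of the given orthogonal basis $f_1,\dots,f_n$ (with $\vf(f_i)=a_i$).

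For instance, take $K=\rr(X)$ and $r=2$. Since $p(\rr(X))=2$, the hypothesis holds with $\rho=\la 1,1\ra$. The form $\vf=\la X,1-X,-1\ra$ is weakly isotropic via $X\cdot 1+(1-X)\cdot 1+(-1)\cdot 1=0$. However, a binary form $\la a,b\ra$ is weakly isotropic only if $-ab$ is a sum of squares, and $X(X-1)$, $X$, $1-X$ are negative at $X=\frac12,-1,2$ respectively. Hence every relation with some $s_j'=0$ has all $s_i'=0$. Consequently:
\begin{itemize}
\item no choice of $u$ and $\mu_0$ in your affine family works;
\item your claim that some $s_i'$ stays nonzero fails;
\item the fallback in your last paragraph fails as well. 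Besides, the cross terms $\lambda_i\lambda_k b(v_i,v_k)$ are not sums of squares, so they cannot be absorbed into coefficients of $a_1,\dots,a_r$.
\end{itemize}
The proper weakly isotropic subform in this example is the hyperbolic plane spanned by $f_1+f_2$ and $f_3$, which is not a coordinate subform.

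\textbf{A separate issue for $n>r+1$.} Your vectors $w_i$ with $q(w_i)=s_i$ need not exist, since only $r+1$ elements are guaranteed to lie in $\D(\rho)$. You must first reduce to $n=r+1$. This works because the hypothesis passes from $r$ to $r+1$ by replacing $\rho$ with $\rho\perp\la t_{r+1}\ra$.

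\textbf{The missing idea} is the switch of roles inside $\rho\otimes\vf$ that the paper uses.
\begin{enumerate}
\item With $n=r+1$, your data give the nonzero isotropic vector $z=\sum_i v_i\otimes f_i$ of $\rho\otimes\vf$.
\item Expanding $z$ in an orthogonal basis $e_1,\dots,e_r$ of $\rho$ gives $z=\sum_{j=1}^r e_j\otimes x_j$, with the $x_j$ not all zero and $\sum_j q(e_j)\vf(x_j)=0$.
\item Your dependence $\sum_i\lambda_iv_i=0$ says precisely that all $x_j$ lie in the hyperplane $H=\{\sum_ic_if_i\mid \sum_i\lambda_ic_i=0\}$. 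This is the orthogonal complement of $y=\sum_i\lambda_ia_i^{-1}f_i$.
\item If $\vf(y)\neq 0$, then $\vf'=\vf|_H$ is a regular $r$-dimensional subform with $\rho\otimes\vf'$ isotropic. So $\vf'$ is weakly isotropic, because the $q(e_j)$ are sums of squares.
\item If $\vf(y)=0$, then $\vf$ is isotropic and contains a hyperbolic plane. This is a proper weakly isotropic subform since $r+1\geq 3$.
\end{enumerate}
Either way $\vf$ is not minimal, and no coordinate of the original relation ever needs to vanish.
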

\begin{proof}
In view of \Cref{nr} we may assume that $K$ is real.
Let $\vf$ be a weakly isotropic quadratic form over $K$ with $\dim(\vf)>r$. 
We need to show that $\vf$ is not minimal weakly isotropic.

Note that the hypothesis of the statement remains true when the value of $r$ is increased.
Therefore we may assume that $\dim(\vf)=r+1$.
Let $a_0,\dots,a_r\in\mg{K}$ be such that $\vf\simeq \la a_0,\dots,a_r\ra$.
Since $\vf$ is weakly isotropic, there exist $t_0,\dots,t_r\in\mg{(\mathsf{\Sigma}K^2)}$ such that $\la a_0t_0,\dots,a_rt_r\ra$ is isotropic.
By the hypothesis, there exists a totally positive $r$-dimensional quadratic form $\rho$ over $K$ such that $t_0,\dots,t_r\in\D(\rho)$.
Let $s_1,\dots,s_r\in\mg{K}$ be such that $\rho\simeq \la s_1,\dots,s_r\ra$.
Since $\rho$ is totally positive, we have that $s_1,\dots,s_r\in\mg{(\mathsf{\Sigma}K^2)}$.
Since $\rho\otimes\vf$ is isotropic, there exist $b_1,\dots,b_r\in\D(\vf)$ such that $\la s_1b_1,\dots,s_rb_r\ra$ is isotropic.
Then $\vf$ has an $r$-dimensional subform $\vf'$ such $b_1,\dots,b_r\in\D(\vf')$.
It follows that $\rho\otimes\vf'$ is isotropic.
Since $\rho$ is totally positive, we obtain that $\vf'$ is weakly isotropic.
Therefore $\vf$ is not minimal weakly isotropic.
\end{proof}

We denote by $p(K)$ the smallest $n\in\nat$ such that every sum of squares in $K$ can be written as a sum of $n$ squares, and we set $p(K)=\infty$ if no such integer $n\in\nat$ exists.
This field invariant is called the \emph{Pythagoras number of $K$}.
Hence $K$ is  pythagorean if and only if $p(K) = 1$.
We retrieve \cite[Theorem 3.3]{Bec06}. 

\begin{cor}\label{wp}
If $K$ is not real pythagorean, then $w(K)\leq p(K)$. 
\end{cor}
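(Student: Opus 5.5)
The plan is to apply \Cref{AP:B} with $r=p(K)$, after disposing of the degenerate cases first.

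If $K$ is nonreal, then $w(K)=1$ by \Cref{nr}, and $p(K)\geq 1$ always, so the claim holds. If $p(K)=\infty$, there is nothing to prove. So we may assume that $K$ is real and $r=p(K)\in\nat$. Since $K$ is not real pythagorean, we get $r\geq 2$, which is exactly what \Cref{AP:B} requires.

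Take $\rho=r\times\la 1\ra$, the $r$-dimensional unit form. It is regular. We have $\D(\rho)\subseteq\mg{(\mathsf{\Sigma}K^2)}$, so $\rho$ is totally positive. By the definition of $p(K)$, every nonzero sum of squares is a sum of $r$ squares, so $\D(\rho)=\mg{(\mathsf{\Sigma}K^2)}$. In particular, any $t_0,\dots,t_r\in\mg{(\mathsf{\Sigma}K^2)}$ all lie in $\D(\rho)$, and this one form $\rho$ works simultaneously for all of them.

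The hypothesis of \Cref{AP:B} is therefore satisfied, and it yields $w(K)\leq r=p(K)$. There is no real obstacle here. The only point that needs care is the requirement $r\geq 2$ in \Cref{AP:B}, which is why the real pythagorean case is excluded and why the nonreal case is handled separately.
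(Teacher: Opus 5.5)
Your proposal is correct and matches the paper's proof, which likewise reduces to $1<p(K)<\infty$ and applies \Cref{AP:B} with $r=p(K)$ and $\rho=r\times\la 1\ra$. Your separate treatment of the nonreal case via \Cref{nr} is a harmless extra step, since \Cref{AP:B} already handles that case internally.
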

\begin{proof}
We may assume that $1<p(K)<\infty$, and then apply \Cref{AP:B} with $r=p(K)$ and $\rho=r\times \la 1\ra$. 
\end{proof}

\begin{prop}\label{Pfister-balanced}
    Let $\pi$ be a Pfister form over $K$.
    Then $\pi$ is strongly balanced 
    if and only if there exists $n\in\nat$ with $n< p(K)$ such that $n\times\la 1\ra\perp\pi$ is isotropic.
\end{prop}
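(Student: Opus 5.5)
The plan is to reduce both directions to a statement about elements represented by $\pi$, using two facts about a Pfister form $\pi$: $\D(\pi)$ is a group, and the pure subform theorem. Write $\pi\simeq\la 1\ra\perp\pi'$ with $\pi'$ the pure subform. Recall that for any $b\in\D(\pi')$ we have $\pi\simeq\lla -b\rra\otimes\rho$ for some Pfister form $\rho$, where $\lla c\rra=\la 1,-c\ra$.

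\emph{Where I am unsure of the scope.} I will argue for $K$ real. In the nonreal case with $\pi=\la 1\ra$ the statement looks problematic: for $K=\mathbb{F}_5$ we have $p(K)=2$ and $\la 1,1\ra$ isotropic, yet $\la 1\ra$ is odd-dimensional and so not strongly balanced. So presumably the statement is meant for real $K$, or for $\pi$ of fold at least $1$. Over a real field every binary torsion form has the shape $a\la 1,-s\ra$ with $a\in\mg{K}$ and $s\in\mg{(\mathsf{\Sigma}K^2)}$, since its discriminant must be negative at every ordering.

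\emph{($\Leftarrow$)} Suppose $n\times\la 1\ra\perp\pi$ is isotropic for some $n<p(K)$.
\begin{enumerate}[(i)]
\item If $\pi$ is isotropic, it is hyperbolic, hence an orthogonal sum of copies of $\la 1,-1\ra$, and so strongly balanced.
\item Otherwise some nonzero sum of squares $t$ satisfies $-t\in\D(\pi)$. Write $-t=x^2+\pi'(y)$. Then $b:=-(t+x^2)\in\D(\pi')$, where $s:=t+x^2$ is a nonzero sum of squares.
\item The pure subform theorem gives $\pi\simeq\la 1,-s\ra\otimes\rho$. Diagonalizing $\rho=\la r_1,\dots,r_k\ra$ yields $\pi\simeq\bigperp_i r_i\la 1,-s\ra$, a sum of binary torsion forms.
\end{enumerate}
Note that the bound $n<p(K)$ is not needed in this direction.

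\emph{($\Rightarrow$)} Suppose $\pi$ is strongly balanced, so it contains a summand $a\la 1,-s\ra$ with $s$ a nonzero sum of squares.
\begin{enumerate}[(i)]
\item Since $a,-as\in\D(\pi)$ and $\D(\pi)$ is a group, we get $-s\in\D(\pi)$.
\item If $\pi$ is isotropic, take $n=0$, noting $p(K)\geq 1$. Otherwise write $-s=x^2+\pi'(y)$, so $-w\in\D(\pi')$ with $w=s+x^2$ a nonzero sum of squares.
\item Write $w=w_1^2+\dots+w_m^2$ with $m\leq p(K)$ when $p(K)<\infty$, or $m$ merely finite otherwise, and $w_1\neq 0$.
\item Evaluating $\la 1\ra\perp\pi'$ at $(w_1,y)$ gives the value $w_1^2-w=-(w_2^2+\dots+w_m^2)$. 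This shows that $(m-1)\times\la 1\ra\perp\pi$ is isotropic, and we take $n=m-1<p(K)$.
\end{enumerate}

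The main obstacle is this last step, namely gaining the strict inequality $n<p(K)$ rather than $n\leq p(K)$. The trick is to absorb one square into the $\la 1\ra$-part of $\pi$, which is possible because $\pi$ represents $-w$ on its pure part.
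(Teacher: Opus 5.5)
Your proof is correct for real $K$ and is essentially the paper's argument. In ($\Rightarrow$) you reach $-w\in\D(\pi')$, with $w$ a sum of squares, via the group property of $\D(\pi)$, whereas the paper gets $\la 1,-d\ra\subseteq\pi$ directly from roundness $a\pi\simeq\pi$. Absorbing one square into the $\la 1\ra$ of $\pi$ is exactly how the paper also obtains $n<p(K)$. In ($\Leftarrow$) you use the Pure Subform Theorem, whereas the paper uses that $\pi_{K(\sqrt{d})}$ is hyperbolic together with \cite[Chap.~VII, Theorem~3.2]{Lam05}.

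Your $\mathbb{F}_5$ example with $\pi=\la 1\ra$ is a genuine exception to the literal statement. The paper's converse tacitly needs $\pi'\neq 0$ to find the common value $d$. However, restricting to real $K$ discards more than necessary. For nonreal $K$ and $\pi$ of fold at least $1$ the statement holds. Every even-dimensional form is then strongly balanced. Moreover, for any $c\in\D(\pi')$ the form $(p(K)-1)\times\la 1\ra\perp\pi$ contains $p(K)\times\la 1\ra\perp\la c\ra$, which is isotropic because $-c$ is a sum of $p(K)$ squares. This case is actually used: the paper applies the proposition to possibly nonreal henselizations $F^\gamma$ in \Cref{LG-strongly-balanced-Pfister}.
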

\begin{proof}
Assume that $\pi$ is isometric to an orthogonal sum of binary torsion forms. 
In particular, $\pi$ contains some binary torsion form $\beta$. 
Let $a,d\in\mg{K}$ be such that $\beta\simeq a \la 1, -d \ra$ over $K$. 
Then $a\in\D(\beta)\subseteq\D(\pi)$. Since $\pi$ is a Pfister form, it follows that $a\pi\simeq \pi$, by \cite[Chap.~X, Theorem~2.8]{Lam05}.
Since $\la 1,-d\ra\simeq a\beta$, which is a subform of $a\pi\simeq\pi$, 
we conclude that $\la 1,-d\ra$ is a subform of $\pi$.
As $\beta$ is torsion, we have $d\in\mg{(\mathsf{\Sigma}K^2)}$.
Hence $d$ is a sum of $n+1$ squares in $K$ for some $n\in\nat$ with $n< p(K)$.
Then the form $(n+1)\times \la 1\ra\perp\la -d\ra$ over $K$ is isotropic.
As $\la 1,-d\ra$ is a subform of $\pi$, it follows that $n\times \la 1\ra\perp\pi$ is isotropic.

Conversely, assume there exists $n\in\nat$ with $n< p(K)$ such that $n\times\la 1\ra\perp\pi$ is isotropic. Let $\pi'$ be the quadratic form over $K$ such that $\pi\simeq \la 1\ra\perp\pi'$.
Then $(n+1)\times \la 1\ra\perp\pi'$ is isotropic.
Therefore $-\pi'$ and $(n+1)\times\la 1\ra$ represent some element $d\in \mg{K}$ in common.
Then $d$ is a sum of $n+1$ squares in $K$ and $\la 1,-d\ra$ is a subform of $\pi$.
It follows that $\pi_{K(\sqrt{d})}$ is isotropic, and hence hyperbolic, because $\pi$ is a Pfister form. Using \cite[Chap.~VII, Theorem~3.2]{Lam05}, we obtain that $\pi\simeq \lla d \rra \otimes \psi$ for some quadratic form $\psi$ over $K$.
For $r=\dim(\psi)$ and $a_1,\dots,a_r\in\mg{K}$ with $\psi\simeq \la a_1,\dots,a_r\ra$, we obtain that 
$$\pi\simeq \lla d\rra\otimes\psi\simeq \la a_1,-a_1d\ra\perp\ldots\perp\la a_r,-a_rd\ra\,,$$ 
and since $d$ is a sum of squares, this shows that $\pi$ is strongly balanced.
\end{proof}

\begin{cor}[Arason-Pfister]\label{Arason-Pfister}
        We have $w(K)\leq 2$ if and only if, for every torsion $2$-fold Pfister form $\pi$ over $K$, there exists $n\in\nat$ with $n<p(K)$ such that $n\times\la 1\ra\perp\pi$ is isotropic.
        Furthermore, $w(K)=2$ if and only if $K$ is real and every torsion form over $K$ is strongly balanced.
\end{cor}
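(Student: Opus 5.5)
The plan is to prove the two assertions separately. Both rest on the classical fact that every torsion form over a real field is an orthogonal sum of minimal weakly isotropic forms. Indeed, if $\vf$ is torsion and nonzero, then $\vf$ is weakly isotropic, so it contains a minimal weakly isotropic subform $\vf_1$. Since $\vf_1$ has signature zero at every ordering (it is weakly isotropic, and a minimal one is torsion by \cite{Bec06}; alternatively one argues directly via Pfister's Local-Global Principle), the complement $\vf\simeq\vf_1\perp\vf'$ gives a torsion form $\vf'$ of smaller dimension, and we induct. I would cite this decomposition from \cite{Bec06} or \cite{AP77} rather than reprove it.

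For the second assertion, assume first that $w(K)=2$. Then $K$ is real by \Cref{nr}, and every minimal weakly isotropic form has dimension at most $2$. Over a real field a $1$-dimensional form is never weakly isotropic, so each minimal weakly isotropic form is binary. A binary weakly isotropic form $\la a,b\ra$ has $-ab$ a sum of squares, so it is torsion. The decomposition above therefore exhibits each torsion form as a sum of binary torsion forms. Conversely, let $K$ be real with all torsion forms strongly balanced, and let $\vf$ be minimal weakly isotropic. I would show that $\vf$ is torsion: if $n\times\vf$ is isotropic, pass to a minimal isotropic configuration. The cleanest route is the known result from \cite{Bec06} that minimal weakly isotropic forms are torsion. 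Once $\vf$ is torsion, it is a sum of binary torsion forms, each of which is weakly isotropic, so minimality forces $\dim\vf=2$. Combined with $w(K)\neq1$, this gives $w(K)=2$.

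For the first assertion, suppose $w(K)\le2$. If $K$ is nonreal the claim is vacuous, since there are no torsion Pfister forms to worry about, or rather $p(K)$ is large enough. So take $K$ real. A torsion $2$-fold Pfister form $\pi$ is then strongly balanced by the second assertion, and \Cref{Pfister-balanced} gives the required $n$. Conversely, assume the Pfister condition. By \Cref{Pfister-balanced}, every torsion $2$-fold Pfister form is strongly balanced. Let $\vf$ be weakly isotropic of dimension $3$; by the reduction in the proof of \Cref{AP:B}, it suffices to treat this case. Scaling, write $\vf=\la1,a,b\ra$. Weak isotropy yields sums of squares $t_i$ with $\la t_0,at_1,bt_2\ra$ isotropic.

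The main obstacle is to produce a weakly isotropic binary subform. My approach would be to consider $\pi=\lla -at_0t_1,-bt_0t_2\rra$-type Pfister forms built from this isotropy. These are torsion because the isotropic ternary form forces a signature relation. Their strong balancedness then yields a binary torsion subform inside $\vf$ after scaling, following the argument of \cite[Satz~1]{AP77}. I expect this to essentially reproduce Arason--Pfister's original proof, so I would follow that source closely.
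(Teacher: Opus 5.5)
\textbf{There is a genuine gap: both converse implications rest on a false lemma or an unjustified reduction.}

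Your forward implications are fine once justified correctly. If $w(K)=2$, then $K$ is real, and a torsion form of positive dimension is weakly isotropic. Its minimal weakly isotropic subforms are binary and hence torsion, so the orthogonal complement is again torsion, and induction gives strong balancedness. \Cref{Pfister-balanced} then yields the condition of the first assertion. For nonreal $K$ that condition is not vacuous, since every form is torsion, but it holds: writing $\pi\simeq\la 1\ra\perp\pi'$, every element of $-\D(\pi')$ is a sum of $p(K)$ squares, so $(p(K)-1)\times\la1\ra\perp\pi$ is isotropic.

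The ``classical fact'' you start from is false. Weak isotropy only says that $\vf$ is indefinite at every ordering, not that its signatures vanish, and a minimal weakly isotropic form need not be torsion. Take a $4$-dimensional torsion form $\pi$ that is not strongly balanced, as in \Cref{AP:w3} over $\qq(X,Y)$. Then $\pi$ has no binary weakly isotropic subform: such a subform would be torsion, and so would its complement, making $\pi$ strongly balanced. Yet every $3$-dimensional subform $\psi$ of $\pi$ is weakly isotropic. Indeed, $2^k\times\pi$ is hyperbolic for some $k$, and $2^k\times\psi$ has more than half its dimension, hence is isotropic. So $\psi$ is minimal weakly isotropic of odd dimension, hence not torsion, and $\pi$ is not a sum of minimal weakly isotropic forms. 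Your argument that strong balancedness of all torsion forms forces $w(K)=2$ rests exactly on this lemma. Showing that, under that hypothesis, every weakly isotropic form of dimension $\geq 3$ has a binary weakly isotropic subform is the whole content of that implication.

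For the converse of the first assertion, the reduction to $\dim(\vf)=3$ cannot be borrowed from \Cref{AP:B}. There it works because the hypothesis for $r$ implies it for every larger $r$. Here nothing you have shows that a weakly isotropic form of dimension $\geq 4$ has a proper weakly isotropic subform.

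Moreover, your Pfister form built from the $t_i$ does not contain $\vf$, so its strong balancedness says nothing about subforms of $\vf$. The right $3$-dimensional computation is as follows. For weakly isotropic $\vf=\la1,a,b\ra$, the form $\lla -a,-b\rra=\vf\perp\la ab\ra$ is torsion. By \Cref{Pfister-balanced} it is strongly balanced if and only if $-s\in\D\la a,b,ab\ra$ for some $s\in\mg{(\mathsf{\Sigma}K^2)}$. This holds if and only if $-abs\in\D(\vf)$, that is, if and only if $\vf\simeq\la -abs\ra\perp\beta$ with $\beta$ a binary torsion form.

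Passing from dimension $3$ to weakly isotropic forms of arbitrary dimension is the genuinely nontrivial step, and both converses need it. The paper does not reprove it: it obtains the corollary by citing \cite[Satz~4]{AP77} and translating that result's Pfister-form condition via \Cref{Pfister-balanced}. You must either cite that result or supply this step.
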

    \begin{proof}
        This follows from \cite[Satz 4]{AP77} together with  \Cref{Pfister-balanced}.
    \end{proof}

In view of \Cref{Arason-Pfister}, the  example by Arason and Pfister mentioned in the introduction shows that $w(\qq(X,Y))\geq 3$.
It is plausible that there exist real fields with arbitrarily large (or infinite) $w$-invariant, but so far there is no confirmed example of a field $K$ with $w(K)>3$.

For $n\in\nat$, we denote by $\I^n K$ the $n$th power of the fundamental ideal $\I K$ in $\W K$, the Witt ring of quadratic forms over $K$.
We further denote by $\I_\tors^n K$ the torsion part of $\I^n K$.

\begin{prop}\label{In-tors-w}
    If $n\in\nat$ is such that $\I^{n+2}_\tors K=0$, then $w(K)\leq 2^{n}$.
\end{prop}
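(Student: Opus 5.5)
The plan is to use \Cref{AP:B} with $r=2^n$. By \Cref{nr} we may assume that $K$ is real. If $n=0$ the hypothesis reads $\I^2_\tors K=0$; I would treat this case separately at the end, checking that it forces $K$ to be real pythagorean with $w(K)=1$ vacuously excluded, or that it falls under the same argument. So assume $n\geq 1$. Given $t_0,\dots,t_{2^n}\in\mg{(\mathsf{\Sigma}K^2)}$, we must produce a totally positive form $\rho$ of dimension $2^n$ representing all the $t_i$.

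The basic tool comes from the hypothesis. For any $s\in\mg{(\mathsf{\Sigma}K^2)}$ the binary form $\lla s\rra=\la 1,-s\ra$ is torsion. Hence, for any Pfister form $\sigma\in \I^{n+1}K$, the form $\sigma\otimes\lla s\rra$ lies in $\I^{n+2}_\tors K=0$, so it is hyperbolic. By \cite[Chap.~X]{Lam05} (a Pfister form $\sigma$ with $\sigma\otimes\lla s\rra$ hyperbolic represents $s$), this gives $s\in\D(\sigma)$. So every $(n+1)$-fold Pfister form represents every nonzero sum of squares.

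I would apply this to a suitable totally positive Pfister form built from the $t_i$. First normalize by replacing $t_i$ with $t_0t_i$, which is harmless: if $\rho$ works for the normalized elements, then $t_0\rho$ works for the original ones. After this we may take $t_0=1$. The natural candidate is
$$\sigma=\lla -1,-t_1,\dots,-t_n\rra\simeq 2\times\la 1,t_1\ra\otimes\cdots\otimes\la 1,t_n\ra,$$
which is totally positive and represents every sum of squares, but has dimension $2^{n+1}$.

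The main obstacle is to cut this down to a $2^n$-dimensional totally positive subform $\rho$ containing the $2^n+1$ prescribed values. I would try to exploit the freedom in the choice of slots. Choose the slots of $\sigma$ so that $\sigma\simeq\rho\perp u\rho$, with $\rho$ an $n$-fold totally positive Pfister form and $u$ a sum of squares. Then use the hyperbolicity of $\rho\otimes\lla -1\rra\otimes\lla s\rra$ for suitable $s$ to show that each $t_i$ is already represented by $\rho$, after rescaling $\rho$ by an element of $\D(\sigma)$.

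If this direct route fails, the fallback is to argue inside the proof of \Cref{AP:B} itself rather than through its hypothesis. Take $\vf=\la a_0,\dots,a_{2^n}\ra$ weakly isotropic, so that $\la a_it_i\ra$ is isotropic. Consider the torsion element $\lla -t_1,\dots,-t_n\rra\otimes\lla t_0\rra$ in $\I^{n+1}_\tors K$. Its product with one further binary torsion form vanishes by the hypothesis, and I would use this vanishing to transfer the isotropy of $\la a_it_i\ra$ to a $2^n$-dimensional subform of $\vf$ tensored with a totally positive form. That yields a proper weakly isotropic subform, as in the last lines of the proof of \Cref{AP:B}, and hence $w(K)\leq 2^n$.
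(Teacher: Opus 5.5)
There is a genuine gap, at exactly the step you call the main obstacle. Your fallback paragraph does not fill it, since it names no mechanism. As you use it, the hypothesis $\I^{n+2}_\tors K=0$ says only that every $(n+1)$-fold Pfister form represents all of $\mg{(\mathsf{\Sigma}K^2)}$. So it controls only $2^{n+1}$-dimensional forms. Applied to $\lla -1\rra^{\otimes(n+1)}\simeq 2^{n+1}\times\la 1\ra$ it gives $p(K)\leq 2^{n+1}$, hence by \Cref{wp} only $w(K)\leq 2^{n+1}$. The whole content of the proposition is the missing factor $2$, and your sketch does not recover it.

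Whatever $\rho$ and $u$ you choose with $\sigma\simeq\rho\perp u\rho$, hyperbolicity of torsion $(n+2)$-fold forms such as $\rho\otimes\lla -1\rra\otimes\lla s\rra$ or $\rho\otimes\lla -u\rra\otimes\lla s\rra$ only says that the $2^{n+1}$-dimensional forms $2\times\rho$ or $\rho\otimes\la 1,u\ra$ represent $s$. To get $s\in\D(\rho)$ you would need $\rho\otimes\lla s\rra=0$, an element of $\I^{n+1}_\tors K$ about which nothing is assumed. Rescaling does not help either: $c\rho$ must represent $t_0=1$, which forces $c\in\D(\rho)$ and hence $c\rho\simeq\rho$.

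More fundamentally, the hypothesis of \Cref{AP:B} is far stronger than $w(K)\leq 2^n$. For $n=1$ it demands, for all $t_1,t_2\in\mg{(\mathsf{\Sigma}K^2)}$, some $t\in\mg{(\mathsf{\Sigma}K^2)}$ with $t_1,t_2\in\D\la 1,t\ra$. This is exactly what the paper's closing question asks about for function fields over hereditarily pythagorean fields. That class contains fields with $\I^3_\tors=0$, such as function fields over $\rr(\!(t)\!)$. So you should not expect it to follow from a choice of slots.

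The paper gives no argument here; it cites \cite[Theorem 3.5]{Bec06}. For the only case used later, $n=1$, bypass \Cref{AP:B} and use the Pfister-form criterion of \Cref{Arason-Pfister}. For a torsion $2$-fold Pfister form $\pi$, the form $2\times\pi=\lla-1\rra\otimes\pi$ lies in $\I^3_\tors K=0$. Hence $\pi\simeq-\pi$, so $-1\in\D(\pi)$ and $\la1\ra\perp\pi$ is isotropic. This settles the case $p(K)\geq 2$. If $K$ is real pythagorean, $\pi$ is hyperbolic, and if $K$ is nonreal, $w(K)=1$ by \Cref{nr}.

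Finally, your plan to treat $n=0$ separately cannot work. A real pythagorean field such as $\rr$ has torsion-free Witt ring, so $\I^2_\tors K=0$. Yet $w(K)=2>2^0$, because $\la 1,-1\ra$ is minimal weakly isotropic. The statement has to be read with $n\geq 1$.
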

\begin{proof}
    See \cite[Theorem 3.5]{Bec06}.
\end{proof}

Note that $(\I_\tors K)^n\subseteq \I_\tors^n K$ for every $n\in\nat^+$.

\begin{thm}[Arason-Pfister]\label{AP:w3}
    Let $k$ be a real field  with $(\I_\tors k)^2\neq 0$.
    Then there exist $u_1,u_2\in \mg{(\mathsf{\Sigma}k^2)}$ such that $u_2\notin\D\la 1,u_1\ra$. For any such elements $u_1$ and $u_2$, the quadratic form $\lla -Y,u_1+X^2+u_2Y\rra$ over $K=k(X,Y)$ is torsion but not strongly balanced.
    In particular $w(K)\geq 3$.
\end{thm}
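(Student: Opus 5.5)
The plan is to prove the three assertions in order. By far the most work goes into showing that $\pi=\lla -Y,\,u_1+X^2+u_2Y\rra\simeq\la 1,Y,-f,-Yf\ra$ is not strongly balanced, where $f=u_1+X^2+u_2Y$.

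\emph{Existence of $u_1,u_2$.} The ideal $\I_\tors k$ is additively generated by binary torsion forms $c\la 1,-u\ra$ with $c\in\mg{k}$ and $u\in\mg{(\mathsf{\Sigma}k^2)}$. So $(\I_\tors k)^2\neq 0$ yields $v_1,v_2\in\mg{(\mathsf{\Sigma}k^2)}$ with $\la 1,-v_1,-v_2,v_1v_2\ra$ anisotropic. I would argue by contraposition. Suppose $\D\la 1,u\ra\supseteq\mg{(\mathsf{\Sigma}k^2)}$ for every $u\in\mg{(\mathsf{\Sigma}k^2)}$. Taking $u=1$ gives $p(k)\leq 2$. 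Applying the hypothesis to suitable products and quotients of $v_1$, $v_2$ (such as $v_1v_2$ and $v_2/v_1$), I would then try to force $v_2\in\D\la 1,-v_1\ra\cdot\D\la 1,-v_1\ra$, which makes $\lla v_1,v_2\rra$ isotropic. I expect only a short norm computation here, but the signs have to be handled with care.

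\emph{Torsion.} By Pfister's Local-Global Principle it suffices to check that the signature vanishes at every ordering $P$ of $K$. If $Y<_P 0$, then $\la 1,Y\ra$ has signature $0$, and so does $\pi\simeq\la 1,Y\ra\otimes\la 1,-f\ra$. If $Y>_P0$, then $f=u_1+X^2+u_2Y>_P0$ because $u_1,u_2$ are sums of squares, so $\la 1,-f\ra$ has signature $0$.

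\emph{Not strongly balanced.} From the proof of \Cref{Pfister-balanced}, if $\pi$ were strongly balanced it would contain $\la 1,-d\ra$ with $d\in\mg{(\mathsf{\Sigma}K^2)}$. Then $\pi'=\la Y,-f,-Yf\ra$ would represent $-d$, that is, $\la 1\ra\perp\pi'\perp m\times\la 1\ra$ would be isotropic for some $m$, and hence $\pi\perp m\times\la 1\ra$ would be isotropic. So I need to show that $\pi\perp m\times\la 1\ra$ is anisotropic over $K$ for every $m\in\nat$. This step is the main obstacle.

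The $Y$-adic valuation is useless for this, because both residue forms contain $\la 1,-(u_1+X^2)\ra$ with $u_1+X^2$ a sum of squares. Instead I would use the discrete valuation on $K=k(X)(Y)$ attached to the prime $f$, that is, to the place $Y\mapsto -(u_1+X^2)/u_2$. In the decomposition $\pi\perp m\times\la 1\ra=(\la 1,Y\ra\perp m\times\la 1\ra)\perp -f\la 1,Y\ra$, Springer's Theorem gives the residue forms
$$(m+1)\times\la 1\ra\perp\la -u_2(u_1+X^2)\ra \quad\text{and}\quad \la 1,-u_2(u_1+X^2)\ra$$
over $k(X)$, up to square classes. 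To get anisotropy I would compose with a second valuation on $k(X)$, namely the degree valuation at $X=\infty$ or the place $X^2=-u_1$ (where $u_1+X^2$ becomes a uniformizer). This should reduce anisotropy to the anisotropy over $k$ of $\la 1,u_1,-u_2\ra$ together with sums of squares. The crucial claim is that this reduced form is anisotropic precisely because $u_2\notin\D\la 1,u_1\ra$.

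Since the raw residue forms above still contain an isotropic-looking piece once $m$ is large, I expect to have to choose the valuation more cleverly. I would first try a rank-$2$ valuation on $K$ with value group $\mathbb{Z}^2$ ordered lexicographically, with $v(X)$ and $v(Y)$ chosen so that the leading forms of $u_1+X^2+u_2Y$ become $u_1+u_2Y$ (when $v(X)>0$ dominates). The $m$ copies of $\la 1\ra$ then only add squares to a single residue class, and the residue form in the class of $1$ becomes $(m+1)\times\la 1\ra\perp\la -u_1\rangle$-type terms combined with $\la u_2\ra$. Its anisotropy has to be reduced exactly to $u_2\notin\D\la 1,u_1\ra$, and making this reduction work is the step I expect to be hardest. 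The final assertion $w(K)\geq 3$ then follows from \Cref{Arason-Pfister}.
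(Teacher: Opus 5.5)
The parts of your proposal that work are these. The torsion argument is correct. So is the reduction, via \Cref{Pfister-balanced}, of ``not strongly balanced'' to the anisotropy of $\pi\perp m\times\la 1\ra$ for every $m$. The final step $w(K)\geq 3$ is also fine. The existence part works too, but it needs no contraposition: if $\lla v_1,v_2\rra$ is anisotropic, take $u_1=v_1v_2$ and $u_2=v_1$, since $\la 1,u_1,-u_2\ra$ is a subform of $\lla v_1,v_2\rra$.

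The genuine gap is the anisotropy itself, and the valuations you propose cannot establish it. If a form is isotropic over the henselization at $v$, it stays isotropic over the henselization at every refinement of $v$. So once one residue form is isotropic, composing with further valuations is useless.

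At the $f$-adic valuation, the residue form $(m+1)\times\la 1\ra\perp\la -u_2(u_1+X^2)\ra$ is actually isotropic over $k(X)$ for large $m$, not merely ``isotropic-looking'', because $u_2(u_1+X^2)$ is a sum of squares. That route is closed.

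Now take a valuation with $X$ in the maximal ideal and $Y$, $u_1+u_2Y$ units, such as the rank-$1$ coarsening of your lexicographic rank-$2$ valuation. Then $\pi$ is unimodular with residue form $\lla -\ovl{Y},u_1+u_2\ovl{Y}\rra$. Suppose some $t\in\mg{(\mathsf{\Sigma}k^2)}$ satisfies $u_1,u_2\in\D\la 1,t\ra$. Then $\lla -t,-\ovl{Y}\rra$ represents $u_1+u_2\ovl{Y}$, so this torsion residue form is divisible by $\la 1,t\ra$ and hence strongly balanced. Consequently $\pi\perp m\times\la 1\ra$ becomes isotropic over the henselization for large $m$.

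This does happen under the hypotheses of the theorem. In case $(iii)$ of the Example following it, $u_1=t_1^2+t_2^2$ and $u_2=(t_1(t_1+1))^2+(t_2(t_1-1))^2$ are both sums of two squares (take $t=1$), while $u_2\notin\D\la 1,u_1\ra$. So making $X^2$ negligible in $f$ discards exactly the information you need. The pieces you hope to combine, such as $(m+1)\times\la 1\ra\perp\la -u_1\ra$, are themselves isotropic for large $m$, since $u_1$ is a sum of squares.

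What is missing is the content of the two Arason--Pfister results that the paper cites rather than reproves.
\begin{enumerate}[$(i)$]
\item By \cite[Satz 3]{AP77}, the hypothesis $u_2\notin\D\la 1,u_1\ra$ over $k$ implies that there is no $t\in\mg{(\mathsf{\Sigma}k(X)^2)}$ with $X^2+u_1,u_2\in\D\la 1,t\ra$ over $k(X)$.
\item Apply \cite[Satz 2]{AP77} over $k_0=k(X)$ with $a=u_1+X^2$ and $b=u_2$. It turns the non-existence of such $t$ into the statement that $\lla -Y,a+bY\rra$ over $k_0(Y)$ is torsion but not strongly balanced. The reverse implication is the easy computation above.
\end{enumerate}
The variable $X$ is used precisely in the first step. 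Over $k$ alone, the condition ``no $t\in\mg{(\mathsf{\Sigma}k^2)}$ with $u_1,u_2\in\D\la 1,t\ra$'' is strictly stronger than $u_2\notin\D\la 1,u_1\ra$. That is why a reduction to $k$ along a valuation that kills $X^2$ cannot work in general. Until you prove or cite these two statements, the central claim of your proposal remains unproved.
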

\begin{proof}
    Since $(\I_\tors k)^2\neq 0$, there exist
    $u,u'\in \mg{(\mathsf{\Sigma}k^2)}$ such that $\la 1,-u,-u',uu'\ra$ is anisotropic.
    For $u_1=uu'$ and $u_2=u$, we obtain that $u_1,u_2\in\mg{(\mathsf{\Sigma}k^2)}$ and $u_2\notin \D\la 1,u_1\ra$.
    It follows by \cite[Satz 3]{AP77} that 
    there exists no $t\in\mg{(\mathsf{\Sigma}k(X)^2)}$ such that $X^2+u_1$ and $u_2$
    are represented by the binary form $\la 1,t\ra$ over $k(X)$.
    Therefore \cite[Satz 2]{AP77} shows that $\pi$ has the claimed properties.
\end{proof}

\begin{ex}
 Real fields $k$ with $u_1,u_2\in\mg{(\mathsf{\Sigma}k^2)}$ such that $u_2\notin\D\la 1,u_1\ra$ are:
 \begin{enumerate}[$(i)$]
     \item Any real field $k$ with $p(k)\geq 3$. (Take $u_1=1$ and $u_2\in\mg{(\mathsf{\Sigma}k^2)}$ not a sum of two squares in $k$.)
     \item Any real number field. (Special case of $(i)$, since $p(k)\in\{3,4\}$, by \cite[Chap.~XI, Example 5.9~(2)]{Lam05}.)
     \item The fields $\rr(t_1,t_2)$, $\rr(\!(t_1)\!)(t_2)$ and $\rr(\!(t_1,t_2)\!)$. (Take $u_1=t_1^2+t_2^2$ and $u_2=t_1^2(t_1+1)^2+t_2^2(t_1-1)^2$; see \cite[Example 6.6]{BL11}.)
 \end{enumerate}
 In these cases, we obtain for $K=k(X,Y)$ that $w(K)\geq 3$, by \Cref{AP:w3}, and  
 furthermore $\I^4_\tors K\neq 0$ and $p(K)\geq 5$.
\end{ex}

We list a few crucial open questions in the study of the invariant $w$.

\begin{qu}
    Does there exist a field $K$ with $w(K)\geq 3$ and $p(K)\leq 4$?
\end{qu}
\begin{qu}
    Does there exist a field $K$ with $w(K)\geq 3$ and $\I^4_\tors K=0$?
\end{qu}

\begin{qu}
Does there exists a field $K$ with $w(K)>3$?    
\end{qu}

\begin{qu}
    Is $w(K)\leq 2$ if $K/\qq$ is an extension of transcendence degree~$1$?
\end{qu}

\section{Valuations and weakly isotropic forms}

We now study the interplay between valuations and the $w$-invariant.

Consider a valuation $v$ on $K$.
We denote by $vK$ its value group, and we call $v$ \emph{trivial} if $vK=0$.
We denote by $\mc{O}_v$ the valuation ring of $v$ and by $\mfm_v$ its maximal ideal.
We set $Kv=\mc{O}_v/\mfm_v$ and call it the \emph{residue field of $v$}.
For $a\in\mc{O}_v$ we write $\ovl{a}^v$ for the residue $a+\mfm_v$ in $Kv$.

Assume that $\car(Kv)\neq 2$. A quadratic form $\vf$ over $K$ is called \emph{$v$-unimodular} if $\vf\simeq \la a_1,\dots,a_n\ra$ for $n=\dim(\vf)$ and certain $a_1,\dots,a_n\in\mg{\mc{O}}_v$.
In this case, the form $\la \ovl{a}^v_1,\dots,\ovl{a}_n^v\ra$ over $Kv$ is determined up to isometry by $\vf$, and it is called the \emph{residue form of $\vf$} (with respect to $v$) and denoted by $\ovl{\vf}^v$.

\begin{prop}\label{w-val-inequality}
We have $w(K)\geq w(Kv)$. 
 \end{prop}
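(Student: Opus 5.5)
The idea is to lift a minimal weakly isotropic form from $Kv$ to a $v$-unimodular form over $K$. We adjust one coefficient so that the lift is still weakly isotropic, and then use a lattice reduction argument to show that every weakly isotropic subform of the lift has full dimension. Throughout, $\car(Kv)\neq 2$, as in the setting of $v$-unimodular forms. If $Kv$ is nonreal, then $w(Kv)=1\leq w(K)$ by \Cref{nr}, since $w(K)\geq 1$ always. So I assume $Kv$ is real and let $\psi=\la \ovl{a}_1,\dots,\ovl{a}_n\ra$ be a minimal weakly isotropic form over $Kv$, with $a_i\in\mg{\mc{O}}_v$ and $n\geq 2$. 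It suffices to produce a minimal weakly isotropic form over $K$ of dimension at least $n$ (and $n$ is arbitrary up to $w(Kv)$).

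\emph{Step 1: a weakly isotropic lift.} Since $\psi$ is weakly isotropic, some $m\times\psi$ is isotropic. Grouping coordinates, this gives sums of squares $\ovl{s}_1,\dots,\ovl{s}_n$ in $Kv$, not all zero, with $\sum_i\ovl{a}_i\ovl{s}_i=0$. Reindex so that $\ovl{s}_1\neq 0$, and lift each $\ovl{s}_i$ to a sum of squares $s_i\in\mc{O}_v$ by lifting the individual squares. Then $s_1\in\mg{\mc{O}}_v$, and $\varepsilon=\sum_i a_is_i\in\mfm_v$. Put $a_1'=a_1-\varepsilon/s_1$. Then $a_1'\in\mg{\mc{O}}_v$ has residue $\ovl{a}_1$ and $a_1's_1+\sum_{i\geq2}a_is_i=0$. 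Hence $\vf=\la a_1',a_2,\dots,a_n\ra$ is weakly isotropic: the relation yields an isotropic vector of $N\times\vf$ for $N$ large. Moreover $\vf$ is $v$-unimodular with $\ovl{\vf}^v\simeq\psi$.

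\emph{Step 2: no weakly isotropic subform of smaller dimension.} Suppose, for a contradiction, that $\eta=\vf|_U$ is weakly isotropic for some $k$-dimensional subspace $U\subseteq V$ with $k<n$. Let $L=\mc{O}_v^n\subseteq V$ be the lattice spanned by the diagonal basis. Then $L_U=U\cap L$ is a saturated sublattice, and its image $\ovl{U}$ in $L/\mfm_vL=(Kv)^n$ is a $k$-dimensional subspace. Take a nonzero isotropic vector of $m\times\eta$ and scale it so that all its coordinates lie in $L_U$ with at least one unimodular coordinate. Reducing modulo $\mfm_v$ gives a nonzero isotropic vector of $m\times(\psi|_{\ovl{U}})$.

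\emph{Step 3: finishing, and the main obstacle.} If $\psi|_{\ovl{U}}$ is regular, then it is a $k$-dimensional weakly isotropic subform of $\psi$ with $k<n$, contradicting minimality. The main obstacle is that $\psi|_{\ovl{U}}$ may be degenerate. In that case a nonzero radical vector $w$ of $\psi|_{\ovl{U}}$ satisfies $\psi(w)=0$, so $\psi$ is isotropic and hence contains a hyperbolic plane, which is weakly isotropic. By minimality this forces $n=2$. Then $k=1$, and $\psi|_{\ovl{U}}$ is degenerate only if it is the zero form on a line. In that case $U=Ku$ with $u$ primitive and $\vf(u)\in\mfm_v$. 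But a $1$-dimensional subform $\la c\ra$ of $\vf$ is weakly isotropic only if $-1$ is a sum of squares in $K$, which is impossible because $K$ is real: it admits orderings lifting those of the real field $Kv$.

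So every weakly isotropic subform of $\vf$ has dimension $n$, and $\vf$ is minimal weakly isotropic. This gives $w(K)\geq n$, and hence $w(K)\geq w(Kv)$.
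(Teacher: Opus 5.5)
Your proof is correct and follows essentially the same route as the paper. You lift the minimal weakly isotropic form to a $v$-unimodular form over $K$, adjusting one coefficient via lifted sums of squares so that the lift is weakly isotropic with residue form $\psi$, and then reduce any smaller weakly isotropic subform of the lift to a weakly isotropic subform of $\psi$ of the same dimension. The differences are only that you spell out the lattice reduction and treat the nonreal and isotropic ($n=2$) cases explicitly, whereas the paper restricts to anisotropic minimal weakly isotropic forms, for which every subform of the lift is automatically unimodular.
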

\begin{proof}  
Consider an anisotropic, minimal weakly isotropic quadratic form $\phi$  over $Kv$. 
Set $n=\dim(\phi)$ and choose $a_1,\dots,a_n\in\mg{\mc{O}}_v$ such that 
$\phi\simeq \la \ovl{a}_1^{v},\dots,\ovl{a}_n^{v}\ra$.
Since $\phi$ is minimal weakly isotropic, there exist $t_1,\dots,t_n\in\mg{\mc{O}}_v$ which are sums of squares in $K$ and such that $\sum_{i=1}^n \ovl{\vphantom{t}a}^v_i \ovl{t}^v_i=0$,
whereby $\sum_{i=1}^n a_it_i\in\mfm_v$.
Let $a=\sum_{i=1}^nt_it_n^{-1}a_i$.
Then $-\ovl{a}^{v}=\ovl{a}_n^{v}$ in $Kv$.
By the choice of $a$, the quadratic form $\varphi=\la a_1,\dots,a_{n-1},-a\ra$ over $K$ is weakly isotropic.
Furthermore, $\vartheta$ is $v$-unimodular and $\phi\simeq \ovl{\vf}^v$.
We choose a weakly isotropic subform $\vartheta$ of $\vf$ with $\dim(\vartheta)\leq w(K)$.
Since $\ovl{\vf}^v=\phi$, which is anisotropic, it follows that $\vartheta$ is unimodular and $\ovl{\vartheta}^v$ is a weakly isotropic subform of $\phi$.
Since $\phi$ was minimal weakly isotropic, we obtain that $\ovl{\vartheta}^v\simeq \phi$ and hence $\dim(\phi)=\dim(\vartheta)\leq w(K)$.
This argument shows that $w(Kv)\leq w(K)$.
\end{proof}

 \begin{cor}\label{alg-rafufi-w-inequality}
  For every algebraic field extension $L/K$, we have 
   $$w(L)\leq w(K(X))\,.$$
\end{cor}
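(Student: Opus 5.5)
The plan is to apply \Cref{w-val-inequality} to a suitable valuation on $K(X)$ whose residue field is $L$.

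First, reduce to the case where $L/K$ is finite. A minimal weakly isotropic form over $L$ and a witnessing isotropic vector of some $n\times\vf$ involve only finitely many elements of $L$. The same holds for the data showing that no proper subform is weakly isotropic: each proper subform that fails to be weakly isotropic does so for all $n$ at once, and this cannot be checked by finitely many elements. So the naive reduction does not immediately work, and I will argue differently.

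Let $\vf$ be minimal weakly isotropic over $L$. Pick a finitely generated, hence finite, subextension $L_0/K$ of $L$ containing the coefficients of $\vf$ and the data making some $n\times\vf$ isotropic. Then $\vf$ is weakly isotropic over $L_0$. It contains a minimal weakly isotropic subform $\vf_0$ over $L_0$, and by minimality over $L$ we have $\vf_0=\vf$, since $\vf_0$ remains weakly isotropic over $L$. So $\vf$ is minimal weakly isotropic over $L_0$, which gives $\dim(\vf)\leq w(L_0)$. It therefore suffices to show $w(L_0)\leq w(K(X))$ for every finite extension $L_0/K$.

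For a finite extension $L/K$, one would like $L=K(\theta)$ with minimal polynomial $f\in K[X]$. That requires a primitive element, which holds since $L/K$ is finite separable ($\car K\neq 2$ does not give separability in general, but we may use the following trick). Let $f$ be the minimal polynomial of $\theta$. The $f$-adic valuation $v_f$ on $K(X)$ has residue field $K[X]/(f)\cong K(\theta)$. If $L$ is not simple, write $L=K(\theta_1,\dots,\theta_m)$ and argue by induction on $m$. At each step, pass from $L_{i-1}$ to $L_{i-1}(\theta_i)$ using $w(L_{i-1}(\theta_i))\leq w(L_{i-1}(X))$. This requires a comparison $w(L_{i-1}(X))\leq w(K(X))$, which is not available.

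A cleaner route to the general finite case is to use a place of $K(X)$ with residue field exactly $L$. The residue fields of valuations on $K(X)$ trivial on $K$ are simple extensions $K[X]/(f)$. So instead use valuations nontrivial on $K$: by Kaplansky–Ostrowski-type constructions, any finitely generated extension can be realised as the residue field of a valuation on a rational function field. I expect this step, achieving an arbitrary finite $L$ as a residue field of a valuation on $K(X)$, to be the main obstacle.

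I would first settle the separable, hence simple, case using $v_f$ and \Cref{w-val-inequality}, with $\car(Kv)=\car K\neq 2$. In characteristic zero, which covers all real fields and hence all cases where $w>1$ could matter, every finite extension is simple and the argument is complete. The remaining purely inseparable case in characteristic $p>2$ is trivial, because there all fields are nonreal and $w(L)=1\leq w(K(X))$ by \Cref{nr}.
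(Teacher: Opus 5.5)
Your argument is correct and is the paper's route: reduce to a finite simple extension $L\cong K[X]/(f)$, then apply \Cref{w-val-inequality} to the $f$-adic valuation on $K(X)$, whose residue field is $L$. You justify that reduction more carefully than the paper, which only asserts it: you pass to finite subextensions via minimality, and you get simplicity by the primitive element theorem in characteristic $0$, using that in positive characteristic both fields are nonreal and so $w=1$. Drop the abandoned detours (induction on generators, Kaplansky--Ostrowski), since that characteristic split already completes the proof.
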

\begin{proof}
    It suffices to consider the case of a finite extension $L=K[X]/(p)$ where $p\in K[X]$ is a monic irreducible polynomial. 
    Then $L$ is the residue field of the $p$-adic valuation on $K(X)$.
    Hence $w(K(X))\geq w(L)$, by  \Cref{w-val-inequality}.
\end{proof}

The following is a variation of \cite[Prop.~5.5]{Bec06}.

\begin{prop}\label{hens-w-equal}
    Let $v$ be a henselian valuation on $K$.
    Then $w(K)=w(Kv)$.
\end{prop}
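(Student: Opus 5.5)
One inequality, $w(K)\geq w(Kv)$, is already given by \Cref{w-val-inequality}. So it remains to show $w(K)\leq w(Kv)$. If $Kv$ is nonreal, then $K$ is nonreal as well. Indeed, $v$ is henselian with $\car(Kv)\neq 2$ (if $\car(Kv)=2$ then $Kv$ is nonreal, and the henselian property lifts $-1$ being a sum of squares modulo the $2$-adic issues; I would handle this case by noting that a henselian field with residue characteristic $2$ and $\car K\neq 2$ is still nonreal, since $1+8x$-type elements are squares). Hence $w(K)=1=w(Kv)$ by \Cref{nr}. So I assume $Kv$ is real. Then $\car(Kv)=0$, every $1$-unit is a square, and every sum of squares in $K$ is of the form $u\cdot c^2$ for some unit $u$ whose residue is a sum of squares in $Kv$.

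Let $\vf$ be a minimal weakly isotropic form over $K$, and set $\Gamma=vK$. I would decompose $\vf\simeq\bigperp_{\gamma}\, c_\gamma\vf_\gamma$, where the $c_\gamma$ are representatives of the distinct classes in $vK/2vK$ and each $\vf_\gamma$ is $v$-unimodular. The key step uses Springer's theorem for henselian valuations with $\car(Kv)\neq2$: an orthogonal sum $\bigperp c_\gamma\psi_\gamma$ of this shape is isotropic if and only if some residue form $\ovl{\psi_\gamma}^v$ is isotropic.

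Apply this to $n\times\vf$. Since $t\la a\ra\simeq u\la a\ra$ up to squares for a sum of squares $t=uc^2$, the form $n\times\vf$ with sums-of-squares coefficients decomposes into components $c_\gamma(n\times\vf_\gamma)$. Therefore $\vf$ is weakly isotropic if and only if some $\ovl{\vf_\gamma}^v$ is weakly isotropic over $Kv$. For the direction from residue to $K$, I lift: a sum of squares in $Kv$ is the residue of a sum of squares in $K$.

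By minimality, $\vf=c_\gamma\vf_\gamma$ for a single $\gamma$. Otherwise the subform $c_\gamma\vf_\gamma$ alone would already be weakly isotropic. Moreover, $\ovl{\vf_\gamma}^v$ is minimal weakly isotropic: a weakly isotropic proper subform of it lifts to a proper subform of $\vf_\gamma$ which is weakly isotropic by the lifting direction. Hence $\dim\vf=\dim\ovl{\vf_\gamma}^v\leq w(Kv)$.

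I expect the main obstacle to be the careful handling of the weak-isotropy transfer, namely justifying that sums of squares in $K$ are, up to squares, units whose residues are sums of squares. This uses the henselian property: for $x_i\in K$ with minimal value $v(x_1)$, we have $\sum x_i^2=x_1^2\sum (x_i/x_1)^2$, and the residue of the last sum is nonzero because $Kv$ is real. The other delicate point is ensuring that the residue characteristic $2$ case does not occur or is trivial.
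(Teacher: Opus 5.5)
Your argument is correct and is essentially the paper's. The paper uses Durfee's theorem, i.e.\ Springer's theorem for henselian valuations, to find a subform $c\vartheta$ of $\vf$ with $\vartheta$ $v$-unimodular and $\ovl{\vartheta}^v$ weakly isotropic, and then lifts a weakly isotropic subform of $\ovl{\vartheta}^v$ of dimension at most $w(Kv)$ back to a subform of $\vartheta$; this is your decomposition-and-lifting step. Drop the residue-characteristic-$2$ aside: the section assumes $\car(Kv)\neq 2$ throughout, and \Cref{nr} would not give $w(Kv)=1$ for a residue field of characteristic $2$ in any case.
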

\begin{proof}
    Let $\vf$ be a weakly isotropic form over $K$. 
    It follows by Durfee's Theorem \cite[Theorem 3.11]{BDM25} that there exists $c\in\mg{K}$ and a $v$-unimodular quadratic form $\vartheta$ over $K$ such that $c\vartheta$ is a subform of $\vf$ and $\ovl{\vartheta}^v$ is weakly isotropic.
    Then $\ovl{\vartheta}^v$ contains a weakly isotropic subform of dimension at most $w(Kv)$. This subform is the residue form $\ovl{\psi}^v$ of a weakly isotropic form $\psi$ over $K$.
    Since $v$ is henselian, it follows again from Durfee's Theorem that $\psi$ is a subform of $\vartheta$.
    Then $c\psi$ is a weakly isotropic subform of $\vf$ of dimension at most $w(Kv)$. 
    This argument shows that $w(K)\leq w(Kv)$.
    The converse inequality is given by \Cref{w-val-inequality}.
\end{proof}

\section{Local-global principle for function fields in one variable}

Mehmeti's local-global principle from \cite{Meh19} for quadratic forms over function fields in one variable over a
complete rank-$1$ valued base field can be extended to quadratic spaces that are not necessarily finite-dimensional. 

Consider a valuation $v$ on $K$.
We denote by $\mathsf{Conv}(vK)$ the set of all proper convex subgroups of $vK$.
This set is totally ordered by inclusion.
We set $\rk(v)=|\mathsf{Conv}(vK)|\in\nat\cup\{\infty\}$ and call this the \emph{rank of $v$}.
We denote by $K^v$ the henselization
(see \cite[Sect.~5.2]{EP05}) and by $\hat{K}^v$ the completion with respect to $v$.
If $\rk(v)=1$, then $\hat{K}^v$ is henselian with respect to the natural extension of $v$, by \cite[Theorem 1.3.1]{EP05}, whereby $K^v\subseteq\hat{K}^v$. 
For a convex subgroup $\Delta$ of $vK$, 
composing $v$ with the quotient map $vK \rightarrow vK/\Delta$ defines a valuation
$v_1 : K\rightarrow (vK/\Delta)\cup\{\infty\}$ with $\mc{O}_v \subseteq\mc{O}_{v_1}$, also called a \emph{coarsening of $v$}.

We denote by $\mc{V}(K)$ the set of equivalence classes $[\nu]$ of valuations $\nu$ on $K$.
For a class $\gamma\in\mc{V}(K)$ of a valuation $\nu$, we set $\mc{O}_\gamma=\mc{O}_\nu$, $K\gamma=K\nu$ and we denote by $K^\gamma$ the henselization of $K$ with respect to $\nu$.
We endow $\mc{V}(K)$ with the constructible topology, defined as the coarsest topology such that the subsets $\{\gamma\in\mc{V}(K)\mid a\in\mc{O}_\gamma\}$ are open and closed for all $a\in\mg{K}$.
This turns $\mc{V}(K)$ into
a compact Hausdorff space; see \cite[Sect.~1]{HK94}.

A finitely generated field extension of transcendence degree $1$ is called a \emph{function field in one variable}.
Given a rank-$1$ valuation $v$ on $K$ and a function field in one variable $F/K$, 
we set $$\mc{V}(F/v)=\{\gamma\in\mc{V}(F)\mid \mc{O}_\gamma\cap K=\mc{O}_v\}\,.$$
Note that $\mc{V}(F/v)$ is closed in $\mc{V}(F)$, and hence it is a compact subspace.

The following is a variation of \cite[Cor.~7.2]{BDD25}.
Recall that we assume that 
$$\car(K)\neq 2\,.$$

\begin{thm}\label{Mehmeti-infinite}
    Let $v$ be a henselian valuation on $K$ with $\rk(v)=1$ and let $F/K$ be a function field in one variable. 
    Let $\vf$ be an anisotropic quadratic space over $F$ of dimension at least $3$. Then there exists a valuation $v'$ on $F$ extending $v$ such that $\vf_{F^{v'}}$ is anisotropic.
\end{thm}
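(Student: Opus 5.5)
The plan is to reduce to the finite-dimensional case by a compactness argument on $\mc{V}(F/v)$. The finite-dimensional input is \cite[Theorem 4.4]{BDGMZ23}, the variant of Mehmeti's local-global principle, which we take in the following form. If $\psi$ is a regular (or, as we will arrange, anisotropic) quadratic form over $F$ of dimension at least $3$, and $\psi_{F^{\gamma}}$ is isotropic for every $\gamma\in\mc{V}(F/v)$, then $\psi$ is isotropic. Here the case of dimension at least $3$ is exactly the setting of that local-global principle, and the henselian (rather than complete) hypothesis on $v$ is what the variant in \cite{BDGMZ23} handles.

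Let $\vf=(V,q)$ be anisotropic with $\dim V\geq 3$. For each finite-dimensional subspace $W\subseteq V$ with $\dim W\geq 3$, let $\vf_W$ denote the restriction. Since $\vf$ is anisotropic, $\vf_W$ is anisotropic, and hence regular, because the radical of a form in characteristic $\neq 2$ consists of isotropic vectors. Set
\[
C_W=\{\gamma\in\mc{V}(F/v)\mid (\vf_W)_{F^\gamma}\text{ is anisotropic}\}.
\]
By the finite-dimensional principle, $C_W\neq\emptyset$. Moreover, $W\subseteq W'$ implies $C_{W'}\subseteq C_W$, so the family $\{C_W\}$ has the finite intersection property: $C_{W_1}\cap\dots\cap C_{W_k}\supseteq C_{W_1+\dots+W_k}\neq\emptyset$.

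The key step is to show that each $C_W$ is closed in $\mc{V}(F/v)$, equivalently that the isotropy locus of a fixed form $\psi$ is open. Suppose $\psi_{F^\gamma}$ is isotropic. Then by Hensel's lemma, or more precisely Durfee's theorem \cite[Theorem 3.11]{BDM25} applied to a diagonalization $\psi\simeq\la a_1,\dots,a_n\ra$, this is witnessed by finitely many elementary valuation-theoretic conditions. For instance, after scaling, some subform has $\gamma$-unimodular entries whose residue form is isotropic; this reduces to the existence of $x_1,\dots,x_n\in F$ with $\sum a_ix_i^2$ of value higher than the $a_ix_i^2$ on the appropriate terms. These are conditions of the form $b\in\mc{O}_\gamma$ or $b\in\mfm_\gamma$ for finitely many $b\in F$, and conversely such conditions imply isotropy over any henselization $F^{\gamma'}$. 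The sets $\{\gamma\mid b\in\mfm_\gamma\}=\{\gamma\mid b^{-1}\notin\mc{O}_\gamma\}$ are clopen in the constructible topology. Hence the isotropy locus is a union of basic open sets, so it is open, and $C_W$ is closed.

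Since $\mc{V}(F/v)$ is compact, the intersection $\bigcap_W C_W$ is nonempty; choose $\gamma$ in it and a valuation $v'$ representing $\gamma$, which extends $v$. If $\vf_{F^{v'}}$ were isotropic, an isotropic vector would be an $F^{v'}$-linear combination of finitely many vectors of $V$. These span some finite $W$ with $\dim W\geq 3$, and then $(\vf_W)_{F^{v'}}$ would be isotropic, contradicting $\gamma\in C_W$.

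The main obstacle is the openness step: making precise that isotropy over the henselization is detected by finitely many membership conditions of the clopen type in the constructible topology, uniformly in nearby $\gamma$, including for higher-rank $\gamma$ in $\mc{V}(F/v)$. I would first try to handle it through the Springer/Durfee decomposition into residue forms for the coarsest relevant valuations, as in \cite[Cor.~7.2]{BDD25}.
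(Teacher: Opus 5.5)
Your compactness step is the second half of the paper's proof in dual form, and it is fine. That step consists of the closed sets $C_W$ with the finite intersection property in the compact space $\mc{V}(F/v)$, and the reduction of an isotropic vector over $F^{v'}$ to a finite-dimensional subspace. The gaps are in the two inputs you take for granted.

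\textbf{The finite-dimensional case.} The principle you attribute to \cite[Theorem 4.4]{BDGMZ23} is exactly the finite-dimensional case of the statement you are proving. As the paper applies it, that theorem does not give it in this form. It produces a rank-$1$ valuation $\nu$ on $F$ with $\mc{O}_v\subseteq\mc{O}_\nu$ such that the form is anisotropic over the completion $\hat{F}^\nu$.

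Passing to the henselization is harmless, since $F^\nu\subseteq\hat{F}^\nu$ when $\rk(\nu)=1$. The real issue is not ``henselian versus complete'' on $K$. It is that $\nu$ may be trivial on $K$, in which case $[\nu]\notin\mc{V}(F/v)$, so nonemptiness of your $C_W$ is not established.

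The missing argument runs as follows. In that case $F\nu/K$ is finite, so $v$ extends uniquely to a henselian valuation $v^\ast$ on $F\nu=(F^\nu)\nu^\ast$, where $\nu^\ast$ is the extension of $\nu$ to $F^\nu$. Composing $\nu^\ast$ with $v^\ast$ gives a valuation $\mu$ on $F^\nu$, which is henselian by \cite[Cor.~4.1.4]{EP05}. Its restriction $v'=\mu|_F$ extends $v$, and $F^{v'}\subseteq F^\nu$, so the form stays anisotropic over $F^{v'}$. This is half of the paper's proof, and it is absent from yours.

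\textbf{Openness of the isotropy locus.} The paper takes this from \cite[Lemma 8.1]{BDD25}; you only sketch it. When $\car(Kv)\neq 2$, your idea can be completed cleanly and uniformly in the rank. Let $\psi=\la a_1,\dots,a_n\ra$ and let $\eta$ be a valuation on $F$ with residue characteristic different from $2$. Then $\psi_{F^{\eta}}$ is isotropic if and only if there is $x\in F^n\setminus\{0\}$ with $\eta(\sum_i a_ix_i^2)>\min_i\eta(a_ix_i^2)$. One direction uses Springer's theorem together with immediacy of $F^\eta/F$; the other uses Hensel's lemma. For fixed $x$, this condition is a finite Boolean combination of conditions $b\in\mc{O}_\eta$ and $b\in\mfm_\eta$, hence clopen.

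However, the theorem only assumes $\car(K)\neq 2$, so residue characteristic $2$ is allowed (e.g.\ $K=\qq_2$). There, Durfee's theorem and residue forms are unavailable, so your route does not give openness. You would need a different argument there or the cited lemma.
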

\begin{proof}
    Assume first that $\dim(\vf)<\infty$.
    Hence $\vf$ is a quadratic form over $K$.
    By \cite[Theorem 4.4]{BDGMZ23}, there exists a valuation $\nu$ of rank $1$ on $F$ such that $\vf_{\hat{F}^\nu}$ is anisotropic and such that $\mc{O}_v\subseteq\mc{O}_\nu$. 
    Since $\rk(\nu)=1$, we have $F^\nu\subseteq\hat{F}^\nu$.
    Hence $\vf_{F^\nu}$ is anisotropic.
    If $\mc{O}_v=\mc{O}_\nu\cap K$, then we may rescale $\nu$ to achieve that $\nu|_K=v$, and then we take $v'=\nu$. 
    Assume now that 
    $\mc{O}_v\subsetneq \mc{O}_{\nu}\cap K$.
    Since $\rk(v)=1$, it follows that $K\subseteq\mc{O}_{\nu}$.
    Then the residue field $F\nu$ is a finite field extension of $K$. 
    Let $\nu^*$ denote the natural extension of $\nu$ to $F^\nu$. 
    Note that $(F^\nu)\nu^*=F\nu$.
    Since the valuation $v$ on $K$ is  henselian and $F\nu/K$ is a finite extension, $v$ extends uniquely to a valuation $v^\ast$ on $F\nu$, and this extension is again henselian with $\rk(v^\ast)=\rk(v)=1$.
     We compose $\nu^\ast$ with the valuation $v^\ast$ defined on $(F^\nu)\nu^\ast=F\nu$; see \cite[p.~45]{EP05}. This yields a rank-$2$ valuation $\mu$ on $F^\nu$ extending $v$ with 
     $\mc{O}_{\mu} = \{x \in \mc{O}_{\nu^\ast} \mid \bar{x}^{\nu^\ast} \in \mc{O}_{v^\ast}\}$.
     By \cite[Cor.~4.1.4]{EP05}, since $\nu^\ast$ and $v^\ast$ are henselian, so is $\mu$.
     Therefore $F^\nu$ is equal to the henselization $F^{v'}$ of the restriction $v'=\mu|_F$.
     Hence
     $\vf_{F^{v'}}$ is anisotropic.

This proves the statement in the case where $\dim(\vf)<\infty$.
Consider now the general case. Write $\vf=(V,q)$ and set $\mc{V}=\mc{V}(F/v)$.
We denote by $\mc{U}$ the set of finite-dimensional $K$-subspaces of $V$.
With $U\in \mc{U}$, we associate the quadratic form $\vf_{U}=(U,q|_{U})$ over $F$ and the set $\mc{V}_U=\{\gamma\in \mc{V
} \mid (\vf_U)_{F^{\gamma}}   \mbox{ isotropic}\}$. 
For $U,U'\in\mc{U}$ with $U\subseteq U'$, we have that $\vf_U$ is a subform of $\vf_{U'}$, whereby $\mc{V}_{U}\subseteq \mc{V}_{U'}$. 
For $U\in\mc{U}$, we have by \cite[Lemma 8.1]{BDD25} that $\mc{V}_U$ is open in $\mc{V}$. 

Assume now that $\vf_{F^{\gamma}}$ is isotropic for every $\gamma\in \mc{V}(F/v)$. 
Then for every $\gamma\in \mc{V}$ there exists $U\in \mc{U}$ such that $(\vf_U)_{F^\gamma}$ is isotropic. 
Hence $\mc{V}=\bigcup_{U\in \mc{U}}\mc{V}_U$. 
Since $\mc{V}$ is compact, there exist $n\in\nat$ and $U_1, \dots, U_n\in \mc{U}$ such that $\mc{V}=\bigcup_{i=1}^n\mc{V}_{U_i}$. 
For $U=U_1+\dots+U_n$, we obtain that $U\in \mc{U}$ and $\mc{V}=\mc{V}_U$.
Hence $(\vf_U)_{F^\gamma}$ is isotropic for every $\gamma\in\mc{V}$.
Since $\dim(\vf_U)\geq 3$, the first part of the proof yields that $\vf_U$ is isotropic.
Consequently, $\vf$ is isotropic.
\end{proof}

\begin{cor}

    Let $v$ be a henselian valuation on $K$ with $\rk(v)=1$.
Let $F/K$ be a function field in one variable.
Let $\vf$ be a quadratic space of dimension at least $3$ over $F$. Then $\vf$ is isotropic if and only if $\vf_{\hat{F}^\nu}$ is isotropic for every rank-$1$ valuation $\nu$ on $K$ with $\mc{O}_v\subseteq\mc{O}_\nu$.

\end{cor}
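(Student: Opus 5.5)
The forward implication is immediate. If $\vf$ is isotropic over $F$, it stays isotropic over every field extension, in particular over each completion $\hat{F}^\nu$. (I read the statement as ranging over rank-$1$ valuations $\nu$ on $F$ with $\mc{O}_v\subseteq\mc{O}_\nu$.) For the converse I argue by contraposition: assuming $\vf$ anisotropic, I produce such a $\nu$ with $\vf_{\hat{F}^\nu}$ anisotropic. First, \Cref{Mehmeti-infinite} gives a valuation $v'$ on $F$ extending $v$ such that $\vf_{F^{v'}}$ is anisotropic.

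Next I reduce to rank $1$. Since $F/K$ has transcendence degree $1$ and $\rk(v)=1$, Abhyankar's inequality gives $\rk(v')\leq 2$.
\begin{enumerate}[$(i)$]
\item If $\rk(v')=1$, take $\nu=v'$; then $\mc{O}_v=\mc{O}_\nu\cap K\subseteq\mc{O}_\nu$.
\item If $\rk(v')=2$, take $\nu$ to be the coarsening of $v'$ with respect to its nontrivial proper convex subgroup. Then $\rk(\nu)=1$ and $\mc{O}_v\subseteq\mc{O}_{v'}\subseteq\mc{O}_\nu$.
\end{enumerate}
In case $(ii)$, the henselization $F^{v'}$ is henselian also for the coarsening of the extended valuation, by \cite[Cor.~4.1.4]{EP05}. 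Hence $F^{\nu}$ embeds into $F^{v'}$ over $F$ by the universal property of the henselization. So in both cases $\vf_{F^\nu}$ is anisotropic.

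It then remains to pass from the henselization $F^\nu$ to the completion $\hat{F}^\nu$ for rank $1$. This is the main step. Suppose $\vf_{\hat{F}^\nu}$ were isotropic. An isotropic vector lies in $U\otimes\hat{F}^\nu$ for some finite-dimensional $F$-subspace $U$ of $V$, so $(\vf_U)_{\hat{F}^\nu}$ is isotropic and we may work with the quadratic form $\vf_U$. If $\vf_U$ is degenerate, a nonzero radical vector $x$ satisfies $2q(x)=b(x,x)=0$, so $\vf_U$ is already isotropic over $F$, a contradiction. Thus $\vf_U$ is regular.

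For a regular form I use the standard density argument. Since $F^\nu$ is dense in $\hat{F}^\nu$, approximate an isotropic vector of $(\vf_U)_{\hat{F}^\nu}$ by a vector over $F^\nu$ whose value under the form has arbitrarily large valuation. A diagonal regular form in a primitive vector with one coordinate a unit after scaling can then be corrected by Hensel's Lemma over the henselian field $F^\nu$, producing an isotropic vector over $F^\nu$. (Alternatively, cite the known fact that for rank-$1$ valuations isotropy over the completion descends to the henselization.) This contradicts the anisotropy of $\vf_{F^\nu}$, so $\vf_{\hat{F}^\nu}$ is anisotropic, as required. I expect this density and Hensel step to be the only nonformal part; the rest is bookkeeping with coarsenings.
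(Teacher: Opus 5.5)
Your proposal is correct and follows the paper's proof step for step: \Cref{Mehmeti-infinite} gives an extension $v'$ of $v$ of rank at most $2$, henselianity of coarsenings gives $F^\nu\subseteq F^{v'}$ for the rank-$1$ coarsening $\nu$, and density of $F^\nu$ in $\hat{F}^\nu$ finishes the argument. For the last step the paper simply cites a lemma of Mehmeti, whereas you first reduce to a finite-dimensional regular subform $\vf_U$ and then sketch the Hensel argument; this is fine, provided that when $\nu$ has residue characteristic $2$ you make the valuation of the error exceed the relevant term by more than $2\nu(2)$, which rank $1$ allows.
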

\begin{proof}
   
If $\vf$ is isotropic, then $\vf_{\hat{F}^\nu}$ is isotropic for every valuation $\nu$ on $F$.
Assume now that $\vf$ is anisotropic over $F$.
By \Cref{Mehmeti-infinite}, there exists $\gamma=[\eta]\in \mc{V}(F/v)$ such that $\vf_{F^{\gamma}}$ is anisotropic.

Then $1\leq \rk(\eta)\leq 2<\infty$, whereby $\eta F$ has a  maximal proper convex subgroup $\Delta$. 
Let $\nu$ denote the rank-$1$ valuation on $F$ obtained as the coarsening of $\eta$ with value group $\eta F/\Delta$. 
Then $\rk(\nu)=1$ and $\mc{O}_v\subseteq\mc{O}_\eta\subseteq \mc{O}_\nu$.
Let $\eta^\ast$ denote the natural extension of $\eta$ to $F^\eta$.
Note that $\eta^\ast$ is henselian and its value group is $\eta F$.
Its rank-$1$ coarsening $\nu^\ast$ with value group $\eta F/\Delta$ extends $\nu$
 and, by \cite[Cor.~4.1.4]{EP05}, it is henselian on $F^\eta$.
Therefore $F^\nu\subseteq F^\eta$, and we conclude that $\vf_{F^\nu}$ is anisotropic.
Since $F^\nu$ is dense in $\hat{F}^\nu$, it  follows by \cite[Lemma 3.16]{Meh19} that $\vf_{\hat{F}^\nu}$ is anisotropic.
\end{proof}

\section{Weakly isotropic forms over function fields in one variable}

We are ready to study the $w$-invariant for function fields in one variable over a henselian valued field. 

\begin{prop}\label{LG-strongly-balanced-Pfister}
    Let $v$ be a henselian valuation on $K$ with $\rk(v)=1$.
    Let $F/K$ be a function field in one variable and $\pi$ a Pfister form over $F$.
    Then $\pi$ is strongly balanced if and only if $\pi_{F^\gamma}$ is strongly balanced for every $\gamma\in\mc{V}(F/v)$.
\end{prop}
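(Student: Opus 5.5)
The plan is to turn ``strongly balanced'' for a Pfister form into the isotropy of a single, infinite-dimensional quadratic space, and then apply \Cref{Mehmeti-infinite}.

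\textbf{Step 1 (reduction to isotropy).} If $\dim(\pi)=1$, then $\pi\simeq\la 1\ra$. Over every field, in particular over $F$ and over each $F^\gamma$, it is not a sum of binary forms, so both sides of the equivalence are false. Assume therefore that $\dim(\pi)\geq 2$, and write $\pi\simeq\la 1\ra\perp\pi'$. Let $L$ be any extension field of $F$. I claim that $\pi_L$ is strongly balanced if and only if the quadratic space
$$\psi=\Big(\bigperp_{i\in\nat}\la 1\ra\Big)\perp\pi'$$
(a countably-infinite-dimensional space) becomes isotropic over $L$.

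The proof of \Cref{Pfister-balanced} shows that $\pi_L$ is strongly balanced if and only if $\pi_L$ contains a subform $\la 1,-d\ra$ with $d\in\mg{(\mathsf{\Sigma}L^2)}$. This holds if and only if $-\pi'_L$ represents a nonzero sum of squares $d$. Here I use the Pfister property $a\pi\simeq\pi$ for $a\in\D(\pi)$ in one direction, and the construction $\pi\simeq\lla d\rra\otimes\psi$ in the other. Note that the bound $n<p(L)$ is automatic for a finite representation, and $p(L)=\infty$ causes no problem.

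\textbf{Step 2 (verifying the claim in both cases).} If $L$ is nonreal, both conditions hold. First, $-1$ is a sum of squares, so the $\la 1\ra$-part of $\psi_L$ is already isotropic. Second, every binary form over $L$ is torsion, so the even-dimensional form $\pi_L$ is strongly balanced.

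If $L$ is real, let $(x,y)\neq 0$ be an isotropic vector of $\psi_L$, so that $\sum x_i^2+\pi'(y)=0$. We cannot have $y=0$, since a nontrivial sum of squares does not vanish in a real field. There are two subcases. If $\sum x_i^2\neq0$, then $-\pi'$ represents the nonzero sum of squares $d=\sum x_i^2$. If $\sum x_i^2=0$, then $x=0$ and $\pi'(y)=0$ with $y\neq 0$, so $\pi_L$ is isotropic and hence hyperbolic, and $-\pi'$ represents $1$. Conversely, if $-\pi'$ represents a nonzero sum of squares $d$, writing $d$ as a finite sum of squares gives an isotropic vector of $\psi_L$ directly.

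\textbf{Step 3 (local--global).} The implication from $F$ to each $F^\gamma$ is immediate, since strong balancedness is preserved under field extension. For the converse, suppose $\pi$ is not strongly balanced over $F$. By Step 1, $\psi$ is then anisotropic over $F$, and $\dim(\psi)=\infty\geq3$. \Cref{Mehmeti-infinite} therefore provides a valuation $v'$ on $F$ extending $v$ such that $\psi_{F^{v'}}$ is anisotropic. Setting $\gamma=[v']$, we have $\gamma\in\mc{V}(F/v)$ and $F^\gamma=F^{v'}$. Applying Step 1 with $L=F^\gamma$ shows that $\pi_{F^\gamma}$ is not strongly balanced.

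The only delicate point is Step 1. There one must check that passing to the infinite space $\psi$ captures exactly the condition of \Cref{Pfister-balanced}, uniformly over $F$ and over all the henselizations. This includes the nonreal and hyperbolic edge cases, which are handled by the case analysis in Step 2.
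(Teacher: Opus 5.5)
Your proof is correct and follows the paper's route: your space $\psi$ is isometric to the paper's direct limit of the forms $n\times\la 1\ra\perp\pi$, and both arguments apply \Cref{Mehmeti-infinite} to this space once its isotropy over each extension $L$ is identified with strong balancedness of $\pi_L$. The only difference is that you verify this identification by hand, covering the nonreal, hyperbolic and $\dim(\pi)=1$ cases, whereas the paper simply cites \Cref{Pfister-balanced}; in the $\dim(\pi)=1$ case you implicitly use $\mc{V}(F/v)\neq\emptyset$, which holds because $v$ extends to $F$.
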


\begin{proof}
For $n\in\nat$ we set $\vf_n=n\times\la 1\ra\perp \pi$.
Let $\vf$ be the quadratic space over $K$ given as the direct limit of $(\vf_n)_{n\in\nat}$.
For any field extension $F'/F$,
we have by \Cref{Pfister-balanced} that $\pi_{F'}$ is strongly balanced if and only if $(\vf_n)_{F'}$ is isotropic for some $n\in\nat$, which is if and only if $\vf_{F'}$ is isotropic.
Now the conclusion follows by \Cref{Mehmeti-infinite}.
\end{proof}

\begin{cor}\label{LG-w2}

    Let $v$ be a henselian valuation on $K$ with $\rk(v)=1$ and let $F/K$ be a function field in one variable.
    Assume that $w(F^\gamma)\leq 2$ for every $\gamma\in\mc{V}(F/v)$.
    Then $w(F)\leq 2$.
\end{cor}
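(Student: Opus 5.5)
We reduce the statement to \Cref{LG-strongly-balanced-Pfister} by means of the Arason--Pfister criterion in \Cref{Arason-Pfister}. By that corollary, to show $w(F)\leq 2$ it suffices to show that every torsion $2$-fold Pfister form $\pi$ over $F$ satisfies the criterion. By \Cref{Pfister-balanced}, this criterion is equivalent to $\pi$ being strongly balanced. So we take an arbitrary torsion $2$-fold Pfister form $\pi$ over $F$ and aim to show that $\pi$ is strongly balanced.

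If $F$ is nonreal, then $w(F)=1\leq 2$ by \Cref{nr}, and there is nothing to show. In general, we argue as follows. Fix $\gamma\in\mc{V}(F/v)$. Since $\pi$ is torsion in $\W F$, its image $\pi_{F^\gamma}$ is torsion in $\W F^\gamma$, and it is still a $2$-fold Pfister form. By hypothesis $w(F^\gamma)\leq 2$. The first equivalence of \Cref{Arason-Pfister} therefore gives some $n<p(F^\gamma)$ such that $n\times\la1\ra\perp\pi_{F^\gamma}$ is isotropic. By \Cref{Pfister-balanced}, $\pi_{F^\gamma}$ is then strongly balanced.

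This holds for every $\gamma\in\mc{V}(F/v)$, so \Cref{LG-strongly-balanced-Pfister} yields that $\pi$ is strongly balanced over $F$. Applying \Cref{Pfister-balanced} once more, the criterion of \Cref{Arason-Pfister} holds for $\pi$, and hence $w(F)\leq 2$.

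The only step needing care is the applicability of \Cref{Arason-Pfister} to $F^\gamma$. If $F^\gamma$ is nonreal, then $\pi_{F^\gamma}$ is still covered: \Cref{Pfister-balanced} makes no realness assumption, and the first equivalence of \Cref{Arason-Pfister} is stated for arbitrary fields. Note also that the bound $n<p$ is allowed to depend on the field. This is harmless here, because strong balancedness is an intrinsic property and the local-global principle is formulated purely in terms of it.
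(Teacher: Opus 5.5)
Your proposal is correct and is essentially the paper's own proof. You use \Cref{Arason-Pfister} together with \Cref{Pfister-balanced} at each $F^\gamma$ to get $\pi_{F^\gamma}$ strongly balanced, globalize with \Cref{LG-strongly-balanced-Pfister}, and conclude $w(F)\leq 2$ by \Cref{Arason-Pfister} again. Your separate treatment of nonreal $F$ (and the remark on nonreal $F^\gamma$) is unnecessary but harmless.
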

\begin{proof}
    Consider a torsion $2$-fold Pfister form $\pi$ over $F$.
    It follows by \Cref{Arason-Pfister} from the hypothesis that $\pi_{F^\gamma}$ is strongly balanced for every $\gamma\in\mc{V}(F/v)$.
    Hence, according to \Cref{LG-strongly-balanced-Pfister}, $\pi$ is strongly balanced over $F$.
    In view of \Cref{Arason-Pfister}, this argument shows that $w(F)\leq 2$.
\end{proof}

We set 
$$\wt{w}(K)=\sup \{w(F)\mid F/K \mbox{ is a function field in one variable}\}\in \nat \cup \{\infty\}\,.$$

\begin{prop}\label{tilde}
   Let $v$ be a valuation on $K$ with $\car(Kv)\neq 2$. Then 
   $$\wt{w}(K)\geq\wt{w}(Kv)\,.$$
\end{prop}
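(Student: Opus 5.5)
Given a function field in one variable $E/Kv$, the plan is to construct a function field in one variable $F/K$ together with a valuation $\mu$ on $F$ extending $v$ such that $F\mu=E$. Then \Cref{w-val-inequality} gives $w(F)\geq w(F\mu)=w(E)$. Taking the supremum over all such $E$ yields $\wt{w}(K)\geq\wt{w}(Kv)$.

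\textbf{The rational case.} First take $E=Kv(X)$. Let $\mu$ be the Gauss extension of $v$ to $F=K(X)$, defined by
$$\mu\Bigl(\sum a_iX^i\Bigr)=\min_i v(a_i).$$
It is standard that $\mu$ is a valuation extending $v$ and that $K(X)\mu=Kv(\ovl{X})$ with $\ovl{X}$ transcendental over $Kv$ (see \cite{EP05}). So the residue field is $Kv(X)$.

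\textbf{The general case.} Now let $E$ be a finite extension of $Kv(X)$. By \Cref{alg-rafufi-w-inequality} we already have $w(E)\leq w(Kv(X,Y))$, but $Kv(X,Y)$ is not of transcendence degree $1$, so this does not help directly. Instead we lift $E$. By the primitive element theorem, if $E/Kv(X)$ is separable we write $E=Kv(X)[Y]/(\bar f)$ with $\bar f$ monic irreducible over $Kv(X)$. After scaling $X$-coefficients we may assume $\bar f\in Kv[X][Y]$ is monic in $Y$. Choose a monic lift $f\in\mc{O}_v[X][Y]$ of $\bar f$.

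Since $\bar f$ is irreducible, $f$ is irreducible over $K(X)$: a factorization of $f$ into monic factors over $K(X)$ would, by Gauss's lemma for the Gauss valuation $\mu$, have factors with $\mu$-integral coefficients, and reducing them would factor $\bar f$. Set $F=K(X)[Y]/(f)$. By the fundamental inequality, $\mu$ has an extension $\mu'$ to $F$ whose residue field contains $Kv(X)[\ovl{Y}]$. Since $\ovl{Y}$ is a root of $\bar f$, this field is $E$. Moreover
$$[F\mu':K(X)\mu]\leq[F:K(X)]=\deg f=[E:Kv(X)],$$
so $F\mu'=E$. Then $w(F)\geq w(E)$ by \Cref{w-val-inequality}.

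\textbf{Main obstacle: inseparability.} The hardest step is the case where $E/Kv(X)$ is not separable, which can happen when $\car(Kv)=p>2$. To handle it, I would first use that a function field in one variable over $Kv$ admits a separating transcendence basis after replacing $X$ by a suitable element of $E$, provided $Kv$ is perfect. In general, though, $E/Kv$ need not be separably generated. So the plan is to avoid needing a primitive element: write $E=Kv(X)(y_1,\dots,y_m)$ and lift successively. At each stage, adjoin a root of a monic lift of the minimal polynomial of $y_{i}$ over the previously constructed residue field. The same degree argument, irreducibility via reduction, and the fundamental inequality apply at each step, and none of them needs separability.
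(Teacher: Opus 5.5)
Your proposal is correct and follows the paper's route. You realize $E$ as the residue field of an extension of $v$ to a function field in one variable $F/K$ and then apply \Cref{w-val-inequality}. The only difference is that the paper gets the lifting step by citing \cite[Lemma 5.6]{BDGMZ23}, whereas you prove it directly via the Gauss extension and successive monic lifts of minimal polynomials; that argument indeed needs no separability, and the primitive-element case is just the case $m=1$. One small correction: the existence of $\mu'$ is Chevalley's extension theorem, not the fundamental inequality. The fact that $F\mu'$ contains a copy of $E$ holds for every extension $\mu'$, because $Y$ is integral over $\mc{O}_\mu$; the fundamental inequality is only needed for the bound $[F\mu':K(X)\mu]\leq[F:K(X)]$.
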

\begin{proof}
  Consider a function field in one variable $F_0/Kv$. By  \cite[Lemma 5.6]{BDGMZ23}, there exists a function field in one variable $F/K$ and an extension $v$ to a valuation $v'$ on $F$ such that $Fv'$ is $Kv$-isomorphic to $F_0$.
  By \Cref{w-val-inequality}, we obtain that $w(F)\geq w(Fv')$. 
  Hence $w(F_0)=w(Fv')\leq w(F)\leq \wt{w}(K)$. Having this for all function fields in one variable $F_0/Kv$, we conclude that $\wt{w}(Kv)\leq \wt{w}(K)$.  
\end{proof}

\begin{thm}\label{value}
    Let $v$ be a henselian valuation on $K$.
    Assume that  $\car(Kv)\neq 2$ and $\wt{w}(Kv)\leq 2$.
    Then $$\wt{w}(K)=\wt{w}(Kv)\,.$$
    
\end{thm}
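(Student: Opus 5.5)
The inequality $\wt{w}(K)\geq\wt{w}(Kv)$ is given by \Cref{tilde}, so the task is to show $w(F)\leq\wt{w}(Kv)$ for every function field in one variable $F/K$. I first dispose of the nonreal case. If $Kv$ is nonreal, then $-1$ is a sum of squares in $Kv$. Since $v$ is henselian and $\car(Kv)\neq 2$, this relation lifts, so $K$ and every $F/K$ are nonreal. Then $w(F)=1=\wt{w}(Kv)$ by \Cref{nr}. Hence I may assume that $Kv$ is real, so that $\wt{w}(Kv)=2$. It then remains to prove $w(F)\leq 2$ for every function field in one variable $F/K$.

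\emph{Rank-$1$ case.} Assume $\rk(v)=1$. I will apply \Cref{LG-w2}, which reduces the claim to showing $w(F^\gamma)\leq 2$ for every $\gamma\in\mc{V}(F/v)$. Since $F^\gamma$ is henselian with residue field $F\gamma$, and $\car(F\gamma)=\car(Kv)\neq 2$, \Cref{hens-w-equal} gives $w(F^\gamma)=w(F\gamma)$. The field $F\gamma$ is an extension of $Kv$ of transcendence degree at most $1$, and it is finitely generated when the transcendence degree is $1$. So there are two subcases. If $F\gamma/Kv$ is a function field in one variable, then $w(F\gamma)\leq\wt{w}(Kv)\leq 2$ by definition of $\wt{w}$. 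If $F\gamma/Kv$ is algebraic, then \Cref{alg-rafufi-w-inequality} gives $w(F\gamma)\leq w(Kv(X))\leq\wt{w}(Kv)\leq 2$. In both subcases $w(F^\gamma)\leq 2$, and \Cref{LG-w2} yields $w(F)\leq 2$.

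\emph{Finite rank.} I proceed by induction on $n=\rk(v)$. For $n\geq 2$, let $v_1$ be the rank-$1$ coarsening of $v$ obtained from the largest proper convex subgroup of $vK$. Then $v$ induces on $Kv_1$ a valuation $\bar v$ with $\rk(\bar v)=n-1$ and residue field $Kv$. By \cite[Cor.~4.1.4]{EP05}, both $v_1$ and $\bar v$ are henselian because $v$ is. The induction hypothesis applied to $(Kv_1,\bar v)$ gives $\wt{w}(Kv_1)=\wt{w}(Kv)\leq 2$. Note that $\car(Kv_1)\neq 2$, since $Kv$ is a residue field of $Kv_1$ with $\car(Kv)\neq 2$. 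The rank-$1$ case applied to $(K,v_1)$ then gives $\wt{w}(K)\leq 2$.

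\emph{Infinite rank.} I expect this to be the main obstacle. The statement $w(F)\leq 2$ concerns, via \Cref{Arason-Pfister}, a single torsion $2$-fold Pfister form $\pi$ over $F$ at a time. My first attempt is to use that $\pi$, and the isotropy of $n\times\la 1\ra\perp\pi$ for $n<p(F)$, involve only finitely many elements. I would then find a coarsening $v_1$ of $v$ of finite rank whose induced valuation $\bar v$ on $Kv_1$ (henselian, with residue field $Kv$) still controls everything. The idea is to choose $\Delta$ so that all relevant values lie in finitely many convex "layers". If that fails, I would instead descend to a subfield $K_0\subseteq K$ finitely generated over the prime field, replaced by its henselization inside $K$; then $v|_{K_0}$ has finite rank by Abhyankar's inequality. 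The difficulty with this fallback is that the residue field of $K_0$ is only a subfield of $Kv$, so its $\wt{w}$ is not directly bounded by the hypothesis, and this is where care is needed.
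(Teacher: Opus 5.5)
Your nonreal case, your rank-$1$ case (via \Cref{LG-w2} and \Cref{hens-w-equal}) and your induction on finite rank are correct and follow the paper's proof. The gap is the infinite-rank case, which you leave open, and neither of your two ideas closes it as stated. A finite-rank coarsening $v_1$ does not help. Its induced valuation $\bar v$ on $Kv_1$ can still have infinite rank, for instance when $vK$ has an infinite descending chain of convex subgroups. Applying the finite-rank case to $(K,v_1)$ then requires $\wt{w}(Kv_1)\leq 2$, which is the original problem again. If $vK$ has no maximal proper convex subgroup, there is no rank-$1$ coarsening at all. Keeping finitely many values in finitely many convex layers does not help either. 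The bound $w(F)\leq 2$ is an existence statement: you must produce a binary torsion subform of $\pi$. Your argument gets it from the local--global principle over all of $\mc{V}(F/v_1)$, not from finitely many given elements. Your fallback, descending to $K_0=\qq(S\cap K)$, which has finite rank by Abhyankar's inequality, is the right direction. But as you note, the henselization of $K_0$ inside $K$ has the same value group and residue field as $K_0$. Its residue field $K_0v$ is only a subfield of $Kv$, so its $\wt{w}$ is not controlled.

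The missing idea is to enlarge $K_0$ inside $K$ so as to recover the full residue field without increasing the rank. This is what the paper does with \cite[Lemma 2.3]{BDM25}.

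\begin{itemize}
\item By Zorn's lemma, take a subfield $K_1\supseteq K_0$ of $K$ maximal such that $vK_1/vK_0$ is torsion.
\item Algebraic elements only add torsion to the value group, so $K_1$ is relatively algebraically closed in $K$. Hence $v|_{K_1}$ is henselian.
\item Since $vK_1/vK_0$ is torsion, $\Delta\mapsto\Delta\cap vK_0$ is a bijection of convex subgroups. So $\rk(v|_{K_1})=\rk(v|_{K_0})<\infty$.
\item $Kv/K_1v$ is algebraic: a lift $a$ of a residue transcendental over $K_1v$ carries the Gauss valuation on $K_1(a)$, so $vK_1(a)=vK_1$, contradicting maximality.
\item Separable algebraic residues lift into $K$ by Hensel's lemma, and the lifts are algebraic over $K_1$, hence lie in $K_1$. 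So $Kv/K_1v$ is purely inseparable, and thus $K_1v=Kv$ because $\car(Kv)=0$ in the real case.
\end{itemize}

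Now the finite-rank case applies to $(K_1,v|_{K_1})$ and gives $\wt{w}(K_1)\leq 2$. Choose the finite set $S$ so that $\pi$ is defined and torsion over $\qq(S)$, $F=K(S)$, and $\qq(S)/\qq(S\cap K)$ has transcendence degree $1$. Then $F_1=K_1(S)$ is a function field in one variable over $K_1$. So $\pi_{F_1}$ is strongly balanced by \Cref{Arason-Pfister} and \Cref{Pfister-balanced}, and this decomposition persists over $F$.
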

\begin{proof}
Suppose first that $\wt{w}(Kv)=1$. Then $Kv$ is nonreal.
Since $v$ is henselian, it follows that $K$ is nonreal.
Hence every field extension $K'/K$ is nonreal, whereby $w(K')=1$.
In particular $\wt{w}(K)=1=\wt{w}(Kv)$ in this case.

We may now assume that $\wt{w}(Kv)=2$.
Then $w(E)=2$ for some function field in one variable $E/Kv$, and in particular $E$ is real, whereby $Kv$ is real.
It follows that $K$ is real.
Then $K(X)$ is real, so we obtain that $\wt{w}(K)\geq w(K(X))\geq 2$.
It therefore remains to show that $\wt{w}(K)\leq 2$.

We first consider the case where $\rk(v)=1$.
Consider a function field in one variable $F/K$.
Recall that, f or $\gamma=[\nu]\in\mc{V}(F/v)$, we set $F\gamma=F\nu$ and denote the henselisation
$F^\nu$ by $F^\gamma$.

Consider an arbitrary $\gamma\in\mc{V}(F/v)$. We claim that $w(F^\gamma)\leq 2$.
Let $\nu$ be a valuation on $F$ with $\gamma=[\nu]$.
Assume first that $\nu|_K$ is trivial. Then $F\gamma$ is a finite extension of $K$.
As $v$ is henselian, it extends uniquely to a valuation $v'$ on $F\gamma$.
Then $v'$ is henselian, so $w(F\gamma)=w((F\gamma)v')$, by \Cref{hens-w-equal}.
Since $(F\gamma)v'$ is a finite field extension of $Kv$, we have $w((F\gamma)v')\leq w(Kv(X))$, by \Cref{alg-rafufi-w-inequality}.
Since $Kv(X)/Kv$ is a function field in one variable,
the hypothesis implies that $w(Kv(X))\leq 2$.
Therefore  by \Cref{alg-rafufi-w-inequality}, we have $w(F\gamma)=w((F\gamma)v')\leq w(Kv(X))\leq 2$.
Assume now that $\nu|_K$ is nontrivial. Then $\nu|_K$ is equivalent to $v$.
This implies that $F\gamma/Kv$ is either a function field in one variable or an algebraic extension. 
In view of \Cref{alg-rafufi-w-inequality}, we obtain in either case from the hypothesis on $Kv$ that $w(F\gamma)\leq 2$.
By \Cref{hens-w-equal}, we thus have $w(F^\gamma)=w(F\gamma)\leq 2$.

We have thus shown that $w(F^\gamma)\leq 2$
for all $\gamma\in\mc{V}(F/v)$. Since $\rk(v)=1$, we conclude by \Cref{LG-w2} that $w(F)\leq 2$.
This completes the proof that $\wt{w}(K)\leq 2$ in the case where $\rk(v)=1$.

We will now extend this 
to the situation where $n=\rk(v)<\infty$.
We will prove by induction on $n$ that $\wt{w}(K)\leq 2$.
If $n=0$, then $Kv=K$, and hence the statement is trivial.
Assume now that $1\leq n<\infty$. 
We will view $v$ as a composition of two valuations of rank $1$ and rank $n-1$, respectively.
Since the value group $vK$ is nontrivial and of finite rank, it has a unique maximal proper convex subgroup $\Delta$. 
Let $v_1$ denote the valuation on
$K$ which is the coarsening of $v$ with value group $vK/\Delta$.
Then $v_1$ is a rank-$1$ valuation on $K$.
Furthermore $v$ induces a valuation $\eta$ on the residue field $K{v_1}$ with value
group $\eta (K{v_1}) = \Delta$, and with residue field $(K{v_1}) \eta = Kv$. In particular, $\rk(\eta)=n-1$.
By \cite[Cor.~4.1.4]{EP05} the valuations $v_1$ on $K$ and $\eta$ on $K{v_1}$
are both henselian. 
Since $(Kv_1)\eta=Kv$, $\wt{w}(Kv)\leq 2$ and $\rk(\eta)=n-1$, the induction hypothesis yields that $\wt{w}(Kv_1)\leq 2$.
Since $\rk(v_1)=1$, the  case which is already established then yields that 
$\wt{w}(K)\leq 2$.

We will now prove that $\wt{w}(K)\leq 2$ without assuming that $\rk(v)<\infty$.
Consider a function field in one variable $F/K$.
We need to show that $w(F)\leq 2$.
To this aim, by \Cref{Arason-Pfister} it suffices to show that every torsion $2$-fold Pfister form over $F$ is strongly balanced.
Consider a torsion $2$-fold Pfister form $\pi$ over $F$.
We choose a finite subset $S$ of $F$ such that~$\pi$ is defined as a $2$-fold Pfister form over $\qq(S)$ and is a torsion form over $\qq(S)$, and further such that
    $F=K(S)$ and $\qq(S)/\qq(S\cap K)$ is of transcendence degree $1$.
    We set $K_0=\qq(S\cap K)$. 
    Since the absolute transcendence degree of $K_0$ is bounded by $|S|$, it follows by \cite[Theorem~3.4.3]{EP05} that $\rk(v|_{K_0})\leq |S|+1<\infty$.
    By \cite[Lemma 2.3]{BDM25}, there exists a subfield $K_1$ of $K$ containing $K_0$ such that $vK_1/vK_0$ is torsion, $v|_{K_1}$ is henselian, and $Kv/K_1v$ is algebraic and purely inseparable.
    Since $\car(Kv)=0$, the last simply means that $Kv=K_1v$.
    Since $vK_1/vK_0$ is torsion, we have bijection $\mathsf{Conv}(vK_1)\to\mathsf{Conv}(vK_0),\Delta\mapsto \Delta\cap vK_0$, whereby $\rk(v|_{K_1})=\rk(v|_{K_0})<\infty$.
    Using the finite-rank case which we already settled, we obtain that $\wt{w}(K)=\wt{w}(K_1v)=\wt{w}(Kv)=2$.
    Let $F_1=K_1(S)$.
    Then $F_1/K_1$ is a function field in one variable and $\pi$ is is defined over $F_1$ and it is a torsion form over $F_1$.
    Hence $w(F_1)\leq \wt{w}(K_1)=2$ and we conclude that $\pi$ contains a binary torsion form over $F_1$, and hence also over $F$.
    Hence, in view of \Cref{Pfister-balanced}, $\pi$ is strongly balanced.
    This finishes the proof.
\end{proof}

\begin{qu}
Does $\wt{w}(K)=\wt{w}(Kv)$ hold in general if $v$ is a henselian valuation $v$ on $K$ with $\car(Kv)\neq 2$?
\end{qu}

We turn to the case where $K$ is hereditarily pythagorean.
By a result of E. Becker \cite[Chap.~III, Theorem 4]{Bec78}, this is equivalent to having that $K$ is real and $p(K(X))=2$.
In particular, this implies that $w(K(X))= 2$, by \Cref{wp}.

\begin{thm}\label{main}
    If $K$ is hereditarily pythagorean, then $\wt{w}(K)=2$.
    
\end{thm}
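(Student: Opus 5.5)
We plan to prove \Cref{main} by reducing it to \Cref{value} via Br\"ocker's valuation-theoretic characterization of hereditarily pythagorean fields \cite[Prop.~3.5]{Bro76}. That characterization says $K$ carries a henselian valuation $v$ with $\car(Kv)\neq 2$ whose residue field $Kv$ is either real closed, or hereditarily euclidean, or at least such that every function field in one variable over $Kv$ has $w\leq 2$. Concretely, we use the form: $K$ is hereditarily pythagorean if and only if $K$ admits a henselian valuation $v$ with $Kv$ hereditarily euclidean (equivalently, $Kv$ real and every finite real extension of $Kv$ is euclidean, e.g.\ $Kv$ real closed) and $vK$ satisfying $[vK:2vK]\leq 2$ in a suitable sense. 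Since real fields of characteristic $0$ are involved, $\car(Kv)=0\neq 2$ holds automatically because $Kv$ is real.

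By \Cref{value}, applied to this $v$, it suffices to show that $\wt{w}(Kv)\leq 2$. Hence the task reduces to showing $w(E)\leq 2$ for every function field in one variable $E/Kv$, where $Kv$ is hereditarily euclidean. For this we intend to use \Cref{wp} or \Cref{In-tors-w}. Over a hereditarily euclidean field $k$ (for instance $k$ real closed), the classical results give $p(E)\leq 2$ for every function field in one variable $E/k$. Indeed, $k$ is in particular hereditarily pythagorean, and by Becker's theorem \cite[Chap.~III, Theorem 4]{Bec78} and its extension to finite extensions of $k(X)$, the Pythagoras number is $2$. Then \Cref{wp} yields $w(E)\leq p(E)\leq 2$ whenever $E$ is not real pythagorean, and a real function field in one variable is never pythagorean. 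Alternatively, for $E$ over a hereditarily euclidean field we expect $\I^3_\tors E=0$, so \Cref{In-tors-w} with $n=1$ gives $w(E)\leq 2$.

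Once $\wt{w}(Kv)\leq 2$ is established, \Cref{value} gives $\wt{w}(K)=\wt{w}(Kv)$, and since $K$ is real, $K(X)$ is real, so $w(K(X))\geq 2$ and equality $\wt{w}(K)=2$ follows. Alternatively, we can directly invoke the remark preceding the theorem that $w(K(X))=2$.

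The main obstacle is pinning down the exact form of Br\"ocker's characterization and verifying the bound $p(E)\leq 2$ (or $\I^3_\tors E=0$) for \emph{every} function field in one variable $E$ over the residue field, not just for $Kv(X)$. If Br\"ocker only guarantees a residue field that is hereditarily pythagorean with a simpler structure, we would first try to iterate: choose $v$ so that $Kv$ is hereditarily euclidean, and then use the known fact that for $E$ a function field in one variable over a hereditarily euclidean field, $E(\sqrt{-1})$ has cohomological $2$-dimension $\leq 1$ in the relevant sense. That gives $\I^2_\tors E$ controlled and hence $p(E)\leq 2$.
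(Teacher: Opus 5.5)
Your overall architecture is the paper's: the lower bound comes from $K(X)$ being real, Br\"ocker's theorem produces a henselian valuation $v$, and \Cref{value} reduces everything to $\wt{w}(Kv)\le 2$. The gap is in this last step, which rests on a misremembered form of Br\"ocker's theorem.

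What \cite[Prop.~3.5]{Bro76} gives is a henselian $v$ such that $Kv$ is hereditarily pythagorean with $|\scg{(Kv)}|\le 4$. So $Kv$ may have two orderings and need not be hereditarily euclidean. Your condition on $vK$ is not part of the theorem either; it fails for instance for $\rr(\!(t_1)\!)(\!(t_2)\!)(\!(t_3)\!)$, which has $16$ square classes, but you never use it.

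Nor can you iterate until the residue field becomes hereditarily euclidean, as your fallback suggests. Take real closures $R_1,R_2$ of $\qq$ inside $\ovl{\qq}$ in which $\sqrt2$ has opposite signs, and set $K=R_1\cap R_2$. Its absolute Galois group is topologically generated by two involutions (a profinite dihedral group), and from this one checks that $K$ is hereditarily pythagorean with exactly two orderings. Being algebraic over $\qq$, $K$ has no nontrivial valuation with real residue field at all. So the only admissible $v$ is trivial, and $Kv=K$ has four square classes.

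Hence you must show $w(E)\le 2$ for every function field in one variable $E$ over a hereditarily pythagorean field $k$ with $|\scg{k}|=4$, and nothing in your proposal covers this case. The Pythagoras-number route via \Cref{wp} is not available here. Becker's theorem concerns $k(X)$ only, and being hereditarily pythagorean does not give $p(E)\le 2$ for finite extensions $E$ of $k(X)$: for $k=\rr(\!(t)\!)$ there are such $E$ with $p(E)=3$. Even in the hereditarily euclidean case, the correct reason for $p(E)\le 2$ is the cohomological one you mention at the end, not Becker's theorem.

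The missing ingredient is the one you invoke only for hereditarily euclidean residue fields. For $k$ hereditarily pythagorean with $|\scg{k}|\le 4$ and any function field in one variable $E/k$, one has $\I^3_\tors E=0$ by \cite[Theorem 6.6]{BDGMZ23}. Then \Cref{In-tors-w} with $n=1$ gives $w(E)\le 2$. With this input in place of your hereditarily euclidean reduction, your argument becomes the paper's proof.
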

\begin{proof}
By definition, $K$ is real, so we have $\wt{w}(K)\geq w(K(X))\geq w(K)\geq 2$, and it remains to show that $\wt{w}(K)\leq 2$.

Since $K$ is hereditarily pythagorean, by Br\"ocker's Theorem \cite[Prop.~3.5]{Bro76}, there exist a henselian valuation $v$ such that the residue field $Kv$ is hereditarily pythagorean and $|\scg{(Kv)}|\leq 4$. 
  For a function field in one variable $E/Kv$, it follows by \cite[Theorem 6.6]{BDGMZ23} that $\I^3_\tors E=0$, and by \Cref{In-tors-w}, this implies that $w(E)\leq 2$. 
  This argument shows that $\wt{w}(Kv)\leq 2$.
  We conclude by \Cref{value} that $\wt{w}(K)\leq 2$. 
\end{proof}

By \cite[Theorem 1.6]{BDGMZ23}, if $K$ is hereditarily pythagorean, then $p(F)\leq 5$ for every function field in one variable $F/K$, and examples where $p(F)=3$ are known.
Hence \Cref{main} cannot be obtained via \Cref{wp}.

\begin{qu}
Let $K$ be hereditarily pythagorean and $F/K$ a function field in one variable.
Can \Cref{AP:B} be applied to $F$ with $r=2$? 
Equivalently, for arbitrary $t_1,t_2\in\mg{(\mathsf{\Sigma}F^2)}$, does there exist $t\in\mg{(\mathsf{\Sigma}F^2)}$ with $t_1,t_2\in\D\la 1,t\ra$?
\end{qu}  

\subsection*{Acknowledgments}
This work was supported by the \emph{Bijzonder Onderzoeksfonds, University of Antwerp} (project \emph{BOF Opvang MSCA IF}, ID 51418). We further would like to express our gratitude to Nicolas Daans for helping us with making the proof of \Cref{Mehmeti-infinite} correct.

\section*{Declarations}

\subsection*{Data availability}
There is no further associated data.

\subsection*{Conflict of interest} The authors declare that there is no conflict of interest.

\bibliographystyle{plain}

\end{document}